\theoremstyle{plain}
\newtheorem{thm}{Theorem}
  \theoremstyle{definition}
  \newtheorem{defn}[thm]{Definition}
  \theoremstyle{plain}
  \newtheorem{prop}[thm]{Proposition}
  \theoremstyle{plain}
  \newtheorem{lem}[thm]{Lemma}
  \theoremstyle{remark}
  \newtheorem{rem}[thm]{Remark}
  \theoremstyle{plain}
  \newtheorem{cor}[thm]{Corollary}
 \theoremstyle{definition}
  \newtheorem{example}[thm]{Example}
  \theoremstyle{remark}
  \newtheorem*{acknowledgement*}{Acknowledgement}
\renewcommand{\Re}{\mathrm{Re}}
\renewcommand{\Im}{\mathrm{Im}}
\numberwithin{equation}{section}
\numberwithin{thm}{section}
\begin{document}
\global\long\def\mc#1{\mathcal{#1}}

\global\long\def\mb#1{\mathbb{#1}}

\global\long\def\mr#1{\mathrm{#1}}

\title{Solution of a Loewner Chain Equation in Several Complex Variables}

\author{Mircea Voda}

\address{Department of Mathematics, University of Toronto, Toronto, ON M5S
2E4, Canada}

\email{mircea.voda@utoronto.ca}

\keywords{Loewner chain; Loewner differential equation; spirallike; parametric
representation; asymptotically spirallike }
\begin{abstract}
We find a solution to the Loewner chain equation in the case when
the infinitesimal generator satisfies $h\left(0,t\right)=0$, $Dh\left(0,t\right)=A$
for any $A\in L\left(\mb C^{n},\mb C^{n}\right)$ with $m\left(A\right)>0$.
We also study the related classes of spirallike mappings, mappings
with parametric representation and asymptotically spirallike mappings. 
\end{abstract}
\maketitle

\section{Introduction and preliminaries}

Subordination chains in several complex variables, the associated
differential equations and applications have been studied by various
authors (see \cite{MR0352510}, \cite{MR1892999}, \cite{MR2017933}, \cite{MR2438427}, \cite{MR2483641}, \cite{MR2507634}, \cite{arXiv10024262v1} 
and the references therein). Initially one assumed that the infinitesimal
generators of the subordination chains satisfied the normalization
$Dh\left(0,t\right)=I$ (and hence the chains satisfied $Df\left(0,t\right)=e^{t}I$).
Unlike the one variable situation it is not true that the non-normalized
case can be reduced to the $Dh\left(0,t\right)=I$ case (see \cite[p 413]{DurGraHamKoh10}).
Recently there has been interest in working with a more general normalization
(\cite{MR2438427},\cite{MR2483641}) or no normalization at all (\cite{MR2507634},\cite{arXiv10024262v1}).
In order to derive the existence of solutions to the Loewner chain
equation one needed to make further restrictions on $Dh\left(0,t\right)$
(\cite{MR2438427},\cite{MR2483641}) or to enlarge the {}``range''
of the Loewner chain (\cite{arXiv10024262v1}).

We will treat the situation when $Dh\left(0,t\right)=A$, where $A\in L\left(\mb C^{n},\mb C^{n}\right)$
is such that $m\left(A\right):=\min\left\{ \Re\left\langle A\left(z\right),z\right\rangle :\left\Vert z\right\Vert =1\right\} >0$.
More specifically, we are interested in studying the problems considered
by Graham, Hamada, Kohr and Kohr \cite{MR2438427} without using the
assumption that $k_{+}\left(A\right)<2m\left(A\right)$ (see \cite[Theorem 2.3]{MR2438427}
and \cite[Remark 2.8]{DurGraHamKoh10}). $k_{+}\left(A\right):=\max\left\{ \Re\lambda:\lambda\in\sigma\left(A\right)\right\} =\lim_{t\rightarrow\infty}\ln\left\Vert e^{tA}\right\Vert /t$
is the upper exponential (Lyapunov) index of $A$ ($\sigma\left(A\right)$
denotes the spectrum of $A$).
\begin{defn}
A mapping $f:B^{n}\times[0,\infty)\rightarrow\mathbb{C}^{n}$ is called
a subordination chain (Loewner chain) if $f\left(\cdot,t\right)$
is holomorphic (univalent) on $B^{n}$ and $f\left(z,s\right)=f\left(v\left(z,s,t\right),t\right)$,
$0\le s\le t$, where $v\left(\cdot,s,t\right)$ is a self-map of
$B^{n}$ fixing $0$ (in other words, $f\left(\cdot,s\right)$ is
subordinate to $f\left(\cdot,t\right)$) . $v$ is called the transition
mapping of the chain.
\end{defn}
Let \[
\mathcal{N}_{A}=\left\{ h\in H\left(B^{n}\right):\Re\left\langle h(z),z\right\rangle >0,\, Dh\left(0\right)=A\right\} \]
where $B^{n}$ denotes the unit ball in $\mathbb{C}^{n}$. Let $\mc H_{A}\left(B^{n}\right)$
be the class of mappings $h:B^{n}\times[0,\infty)\rightarrow\mathbb{C}^{n}$
such that $h\left(\cdot,t\right)\in\mathcal{N}_{A}$ for $t\ge0$
and $h\left(z,\cdot\right)$ is measurable on $[0,\infty)$ for $z\in B^{n}$.
Such mappings will be called infinitesimal generators. We study the
existence of solutions for the Loewner chain equation:\begin{equation}
\frac{\partial f}{\partial t}\left(z,t\right)=Df\left(z,t\right)h\left(z,t\right)\,\mathrm{a.e.}\, t\ge0,\, z\in B^{n}\label{eq:LE1}\end{equation}
where $h\in\mc H_{A}\left(B^{n}\right)$.

Throughout this paper we let $n_{0}:=\left[k_{+}\left(A\right)/m\left(A\right)\right]$.

By \cite[Theorem 2.1]{MR2438427} we know that the Loewner equation
for the transition mapping has a solution regardless of the value
of $n_{0}$. More precisely, we know that the initial value problem

\begin{equation}
\frac{\partial v}{\partial t}=-h\left(v,t\right)\,\mathrm{a.e.}\, t\ge s,\, v\left(z,s,s\right)=z,\, s\ge0\label{eq:TLE}\end{equation}
has a unique solution $v=v(z,s,t)$ such that $v\left(\cdot,s,t\right)$
is a univalent Schwarz mapping and $v\left(z,s,\cdot\right)$ is Lipschitz
continuous on $[s,\infty)$ locally uniformly with respect to $z$.
Furthermore we know that\begin{equation}
\frac{\left\Vert v\left(z,s,t\right)\right\Vert }{\left(1-\left\Vert v\left(z,s,t\right)\right\Vert \right)^{2}}\le e^{m\left(A\right)\left(s-t\right)}\frac{\left\Vert z\right\Vert }{\left(1-\left\Vert z\right\Vert \right)^{2}},\, z\in B^{n},\, t\ge s\ge0.\label{eq:transition inequality}\end{equation}

If $f\left(z,t\right)$ is a Loewner chain satisfying \prettyref{eq:LE1}
then $Df\left(0,t\right)=e^{tA}$ and we can write\begin{eqnarray*}
f\left(z,t\right) & = & e^{tA}\left(z+\sum_{k=2}^{\infty}F_{k}\left(z^{k},t\right)\right)\\
h\left(z,t\right) & = & Az+\sum_{k=2}^{\infty}H_{k}\left(z^{k},t\right)\end{eqnarray*}
where $F_{k}\left(\cdot,t\right)$ and $H_{k}\left(\cdot,t\right)$
are homogeneous polynomial mappings of degree $k$. We will denote
the Banach space of homogeneous polynomial mappings of degree $k$
from $\mathbb{C}^{n}$ to $\mathbb{C}^{n}$ by $\mathcal{P}^{k}\left(\mathbb{C}^{n}\right)$. 

Equating coefficients on both sides of \prettyref{eq:LE1} we get\begin{equation}
\frac{dF_{k}}{dt}(z^{k},t)=B_{k}\left(F_{k}\left(z^{k},t\right)\right)+N_{k}\left(z^{k},t\right),\,\mathrm{a.e.}\, t\in[0,\infty)\label{eq:coefficients}\end{equation}
where $B_{k}$ is a linear operator on $\mathcal{P}^{k}\left(\mathbb{C}^{n}\right)$
defined by \[
B_{k}\left(Q_{k}\left(z^{k}\right)\right)=kQ_{k}\left(Az,z^{k-1}\right)-AQ_{k}\left(z^{k}\right)\]
and $N_{k}\left(\cdot,t\right)\in\mathcal{P}^{k}\left(\mathbb{C}^{n}\right)$
is defined by\[
N_{k}\left(z^{k},t\right)=H_{k}\left(z^{k},t\right)+\sum_{j=2}^{k-1}jF_{j}\left(H_{k-j+1}\left(z^{k-j+1},t\right),z^{j-1},t\right).\]

We will say that a solution $f\left(z,t\right)$ of \prettyref{eq:LE1}
is polynomially bounded (bounded) if $\left\{ e^{-tA}f\left(\cdot,t\right)\right\} _{t\ge0}$
is locally polynomially bounded (locally bounded), i.e. for any compact
set $K\subset B^{n}$ there exists a constant $C_{K}$ and a polynomial
(constant polynomial) $P$ such that\[
\left\Vert e^{-tA}f\left(z,t\right)\right\Vert \le C_{K}P\left(t\right),\, z\in K,\, t\in[0,\infty).\]
The solutions of \prettyref{eq:coefficients} will be regarded as
functions $F_{k}:[0,\infty)\rightarrow\mc P^{k}\left(\mathbb{C}^{n}\right)$.
Consequently we will say that such $F_{k}$ are polynomially bounded
(bounded) if there exists a polynomial $ $(constant polynomial) $P$
such that $\left\Vert F_{k}\left(t\right)\right\Vert \le P\left(t\right)$,
$t\ge0$.

\prettyref{pro:normal structure} will show that a polynomially bounded
solution of \prettyref{eq:LE1} can be recovered from its first $n_{0}$
coefficients and the solution of \prettyref{eq:TLE}. Conversely,
\prettyref{thm:polybounded solutions} will show that by finding polynomially
bounded solutions to the first $n_{0}$ coefficient equations \prettyref{eq:coefficients}
we can find a solution of \prettyref{eq:LE1}. These results generalize
Poreda \cite[Theorem 4.1 and Theorem 4.4]{MR1104523}. Finally, after
a discussion about the existence of polynomially bounded solutions
to the coefficient equations we will obtain the main result, \prettyref{thm:existence of solutions},
that guarantees the existence of a Loewner chain solution for \prettyref{eq:LE1}.

We also consider what happens to the various classes of univalent
mappings that are related to Loewner chains. For convenience we recall
the definitions of the classes that we are considering, as given in
\cite{MR2438427}, where the case $n_{0}=1$ is treated. 

Let $\Omega\subset\mb C^{n}$ be a domain containing the origin.
\begin{defn}
We say that $\Omega$ is spirallike with respect to $A$ if $e^{-tA}w\in\Omega$
for any $w\in\Omega$ and $t\ge0$.
\end{defn}
\begin{defn}\label{def:asym spirallike}We say that $\Omega$ is
$A$-asymptotically spirallike if there exists a mapping $Q=Q\left(z,t\right):\Omega\times[0,\infty)\rightarrow\mb C^{n}$
that satisfies the following conditions:
\begin{enumerate}
\item $Q\left(\cdot,t\right)$ is a holomorphic mapping on $\Omega$, $Q\left(0,t\right)=0$,
$DQ\left(0,t\right)=A$, $t\ge0$, and the family $\left\{ Q\left(\cdot,t\right)\right\} _{t\ge0}$
is locally uniformly bounded on $\Omega$;
\item $Q\left(z,\cdot\right)$ is measurable on $[0,\infty)$ for all $z\in\Omega$;
\item the initial value problem\begin{equation}
\frac{\partial w}{\partial t}=-Q\left(w,t\right)\,\mathrm{a.e.}\, t\ge s,\, w\left(z,s,s\right)=z\label{eq:asym spir}\end{equation}
has a unique solution $w=w\left(z,s,t\right)$ for each $z\in\Omega$
and $s\ge0$, such that $w\left(\cdot,s,t\right)$ is a holomorphic
mapping of $\Omega$ into $\Omega$ for $t\ge s$, $w\left(z,s,\cdot\right)$
is locally absolutely continuous on $[s,\infty)$ locally uniformly
with respect to $z\in\Omega$ for $s\ge0$, and $\lim_{t\rightarrow\infty}e^{tA}w\left(z,0,t\right)=z$
locally uniformly on $\Omega$.
\end{enumerate}
\end{defn}

Let $f:B^{n}\rightarrow\mb C^{n}$ be a normalized univalent mapping,
i.e. such that $f\left(0\right)=0$ and $Df\left(0\right)=I$. $S\left(B^{n}\right)$
will denote the class of all such mappings. 
\begin{defn}
We say that $f$ is spirallike with respect to $A$ if $f\left(B^{n}\right)$
is spirallike. We will use $\hat{S}_{A}\left(B^{n}\right)$ to denote
the class of mappings that are spirallike with respect to $A$.
\end{defn}
\begin{defn}We say that $f$ is $A$-asymptotically spirallike if
$f\left(B^{n}\right)$ is $A$-asymptotically spirallike. $S_{A}^{a}\left(B^{n}\right)$
will denote the class of $A$-asymptotically spirallike mappings.
\end{defn}

\begin{defn}We say that $f$ has $A$-parametric representation if
there exists a mapping $h\in\mc H_{A}\left(B^{n}\right)$ such that
$f\left(z\right)=\lim_{t\rightarrow\infty}e^{tA}v\left(z,t\right)$
locally uniformly on $B^{n}$, where $v$ is the unique locally absolutely
continuous solution of the initial value problem\begin{equation}
\frac{\partial v}{\partial t}=-h\left(v,t\right)\, a.e.\, t\ge0,\, v\left(z,0\right)=z,\, z\in B^{n}.\label{eq:transition teq0}\end{equation}
$S_{A}^{0}\left(B^{n}\right)$ will denote the class of mappings with
$A$-parametric representation.\end{defn}

From \cite{MR2438427} we know that when $n_{0}=1$ we have that $S_{A}^{a}\left(B^{n}\right)=S_{A}^{0}\left(B^{n}\right)$
and the classes are compact. Also, since $\hat{S}_{A}\left(B^{n}\right)$
is a closed subset of $S_{A}^{a}\left(B^{n}\right)$ we also have
that $\hat{S}_{A}\left(B^{n}\right)$ is compact when $n_{0}=1$.
In \prettyref{sec:Spirallikeness,-parametric-representation,} we
will obtain a complete description of the $A$'s for which $\hat{S}_{A}\left(B^{n}\right)$
is compact (\prettyref{thm:compactness of spiralikeness}) and we
will study how the properties of $S_{A}^{0}\left(B^{n}\right)$ (\prettyref{exa:param rep not compact},
\prettyref{rem:parametric neq asymptotic}) and $S_{A}^{a}\left(B^{n}\right)$
(\prettyref{pro:normality for asymptotic}, \prettyref{rem:Loewner and asymptotic})
degenerate when $n_{0}>1$.

Throughout this paper the solutions of \prettyref{eq:LE1}, \prettyref{eq:TLE},
\prettyref{eq:coefficients} and \prettyref{eq:transition teq0} are
assumed to be locally absolutely continuous in $t$, locally uniformly
with respect to $z$.

\section{Solution of the Loewner chain equation}

We will repeatedly use the fact that given $\epsilon>0$ there exists
a constant $C_{\epsilon}$ such that \begin{equation}
\left\Vert e^{tA}\right\Vert \le C_{\epsilon}e^{t\left(k_{+}\left(A\right)+\epsilon\right)},\, t\ge0\label{eq:Banach exp bound}\end{equation}
(this follows immediately from the definition of $k_{+}\left(A\right)$).
In fact we can find a polynomial $P_{A}$ such that\begin{equation}
\left\Vert e^{tA}\right\Vert \le P_{A}\left(t\right)e^{tk_{+}\left(A\right)},\, t\ge0\label{eq:Cn exp bound}\end{equation}
 (see for example \cite[p 61, Exercise 16]{MR0352639}). Furthermore,
if $A$ is normal then\[
\left\Vert e^{tA}\right\Vert =e^{tk_{+}\left(A\right)},\, t\ge0.\]
Indeed, if we write $A=UDU^{*}$ where $D$ is a diagonal matrix and
$U$ is a unitary matrix then\[
\left\Vert e^{tA}\right\Vert =\left\Vert Ue^{tD}U^{*}\right\Vert =\left\Vert e^{tD}\right\Vert =e^{tk_{+}\left(D\right)}=e^{tk_{+}\left(A\right)}\]
(for non-Euclidean norms we just get $\left\Vert e^{tA}\right\Vert \le C_{A}e^{tk_{+}\left(A\right)}$). 

The following is a generalization of \cite[Theorem 4.1]{MR1104523}.
\begin{prop}
\label{pro:normal structure}If $f\left(z,t\right)$ is a polynomially
bounded solution of \prettyref{eq:LE1} such that\[
f\left(z,t\right)=e^{tA}\left(z+\sum_{k=2}^{\infty}F_{k}\left(z^{k},t\right)\right)\]
then \[
f\left(z,s\right)=\lim_{t\rightarrow\infty}e^{tA}\left(v\left(z,s,t\right)+\sum_{k=2}^{n_{0}}F_{k}\left(v\left(z,s,t\right)^{k},t\right)\right)\]
 and the limit is locally uniform in $z$.\end{prop}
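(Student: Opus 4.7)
The plan is as follows. First, derive the identity $f(z,s) = f(v(z,s,t),t)$ for all $t\ge s$ from \prettyref{eq:LE1} and \prettyref{eq:TLE}: setting $g(t):=f(v(z,s,t),t)$, the composition is locally absolutely continuous in $t$ by the standing assumption, and for almost every $t\ge s$ the chain rule gives
\[ g'(t) = Df(v,t)\,\partial_t v + \partial_t f(v,t) = -Df(v,t)\,h(v,t) + Df(v,t)\,h(v,t) = 0, \]
hence $g(t)\equiv g(s)=f(z,s)$. Inserting the power series of $f(\cdot,t)$ on both sides, the statement reduces to proving
\[ \sum_{k=n_0+1}^\infty e^{tA} F_k\bigl(v(z,s,t)^k,t\bigr) \longrightarrow 0 \]
as $t\to\infty$, locally uniformly in $z$.

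The tail is controlled by three estimates. First, \prettyref{eq:Cn exp bound} gives $\|e^{tA}\|\le P_A(t)\,e^{tk_+(A)}$. Second, \prettyref{eq:transition inequality} (combined with $x\le x/(1-x)^2$ on $[0,1)$) yields $\|v(z,s,t)\|\le M_{K,s}\,e^{-m(A)(t-s)}$ for $z$ in a compact set $K\subset B^n$ and $t\ge s$. Third, polynomial boundedness of $\{e^{-tA}f(\cdot,t)\}$ on a closed ball $\overline{B(0,r)}\subset B^n$, combined with a scalar Cauchy estimate applied to the map $\zeta\mapsto e^{-tA}f(\zeta w,t)=\zeta w+\sum_{k\ge 2}\zeta^kF_k(w^k,t)$ for $\|w\|=1$, gives $\|F_k(\cdot,t)\|_{\mc P^k(\mb C^n)}\le C_r P_r(t)/r^k$. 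Fixing any $r\in(0,1)$ and choosing $t_0$ large enough that $\|v(z,s,t)\|\le r/2$ for all $z\in K$ and $t\ge t_0$, we sum the resulting geometric tail to obtain, for $t\ge t_0$,
\[ \Bigl\|\sum_{k=n_0+1}^\infty e^{tA}F_k(v^k,t)\Bigr\| \le P_A(t)\,e^{tk_+(A)}\,C_r P_r(t)\,\frac{(\|v\|/r)^{n_0+1}}{1-\|v\|/r} \le Q(t)\,e^{t(k_+(A)-(n_0+1)m(A))}, \]
for some polynomial $Q$ depending on $K$, $s$, and $r$. Since $n_0+1>k_+(A)/m(A)$, the exponent is strictly negative, so the right side tends to zero uniformly in $z\in K$.

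The main obstacle is recognizing that the cut-off $n_0=\lfloor k_+(A)/m(A)\rfloor$ is precisely what balances the worst-case growth $e^{tk_+(A)}$ of $\|e^{tA}\|$ against the iterated decay $\|v\|^{n_0+1}$; this is exactly the ingredient that allows the restriction $k_+(A)<2m(A)$ (i.e.\ $n_0=1$) of \cite{MR2438427} to be removed. The remaining work---extracting the exponential decay of $\|v\|$ from \prettyref{eq:transition inequality} and verifying the vector-valued Cauchy estimates in the polynomially bounded category---is routine.
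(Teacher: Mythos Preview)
Your proof is correct and follows essentially the same approach as the paper: write $f(z,s)=f(v(z,s,t),t)$, split off the tail $\sum_{k>n_0}e^{tA}F_k(v^k,t)$, and kill it by combining Cauchy estimates on the $F_k$ (from polynomial boundedness) with the growth of $\|e^{tA}\|$ and the exponential decay of $\|v\|$ from \prettyref{eq:transition inequality}. The only minor differences are that you use the sharper bound \prettyref{eq:Cn exp bound} in place of the paper's \prettyref{eq:Banach exp bound} (avoiding the auxiliary $\epsilon$), and you explicitly derive the identity $f(z,s)=f(v(z,s,t),t)$ via the chain rule, which the paper simply asserts.
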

\begin{proof}
\begin{multline*}
f\left(z,s\right)=f\left(v\left(z,s,t\right),t\right)\\
=e^{tA}\left(v\left(z,s,t\right)+\sum_{k=2}^{n_{0}}F_{k}\left(v\left(z,s,t\right)^{k},t\right)\right)+e^{tA}R\left(v\left(z,s,t\right),t\right)\end{multline*}
where $R\left(z,t\right)=\sum_{k=n_{0}+1}^{\infty}F_{k}\left(z^{k},t\right)$.
From the assumption on $f$, the formula for the remainder of the
Taylor series and Cauchy's formula, we easily get that $\left\{ R\left(\cdot,t\right)\right\} _{t\ge0}$
is locally polynomially bounded and in fact\[
\left\Vert R\left(z,t\right)\right\Vert \le C_{r}P\left(t\right)\left\Vert z\right\Vert ^{n_{0}+1},\,\left\Vert z\right\Vert \le r.\]
From the above and \prettyref{eq:transition inequality} we get

\begin{eqnarray*}
\left\Vert e^{tA}R\left(v\left(z,s,t\right),t\right)\right\Vert  & \le & C_{\epsilon}e^{t(k_{+}\left(A\right)+\epsilon)}P_{r}\left(t\right)\left\Vert v\left(z,s,t\right)\right\Vert ^{n_{0}+1}\\
 & \le & C_{\epsilon,r,s}e^{t\left(k_{+}\left(A\right)+\epsilon-\left(n_{0}+1\right)m\left(A\right)\right)}P\left(t\right),\,\left\Vert z\right\Vert \le r.\end{eqnarray*}
Hence, taking $\epsilon$ small enough we can conclude that $e^{tA}R\left(v\left(z,s,t\right),t\right)\rightarrow0$
locally uniformly.
\end{proof}
In order to prove \prettyref{thm:polybounded solutions} we will need
the following lemmas.
\begin{lem}
\label{lem:exp(A)exp(B)...}If $Q_{k}\in\mathcal{P}^{k}\left(\mathbb{C}^{n}\right)$
then the following identities hold for $t\in\mb R$\begin{eqnarray}
e^{tA}e^{tB_{k}}Q_{k}\left(\left(e^{-tA}z\right)^{k}\right) & = & Q_{k}\left(z^{k}\right)\label{eq:AB-A}\\
e^{tA}Q_{k}\left(\left(e^{-tA}z\right)^{k}\right) & = & e^{-tB_{k}}Q\left(z^{k}\right)\label{eq:A-A}\\
e^{tA}e^{tB_{k}}Q_{k}\left(z^{k}\right) & = & Q_{k}\left(\left(e^{tA}z\right)^{k}\right)\label{eq:AB}\end{eqnarray}
.\end{lem}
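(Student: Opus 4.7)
The plan is to prove \eqref{eq:A-A} first by an ODE argument in the finite-dimensional Banach space $\mathcal{P}^{k}(\mathbb{C}^{n})$; then \eqref{eq:AB-A} and \eqref{eq:AB} will drop out by elementary manipulations. Define
\[
R_{t}(z^{k}) := e^{tA}Q_{k}\bigl((e^{-tA}z)^{k}\bigr),
\]
so that $R_{t}\in\mathcal{P}^{k}(\mathbb{C}^{n})$ and $R_{0}=Q_{k}$. I will show that $R_{t}$ satisfies the linear ODE $\frac{d}{dt}R_{t}=-B_{k}R_{t}$. Uniqueness for linear ODEs in finite dimensions then forces $R_{t}=e^{-tB_{k}}Q_{k}$, which is precisely \eqref{eq:A-A}.

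The heart of the argument is the differentiation. Using the product rule and the fact that $Q_{k}$, viewed as a homogeneous polynomial, satisfies $\frac{d}{dt}Q_{k}\bigl((e^{-tA}z)^{k}\bigr)=-k\,Q_{k}\bigl(Ae^{-tA}z,(e^{-tA}z)^{k-1}\bigr)$ (where the right-hand side is the symmetric multilinear extension of $Q_{k}$ evaluated with one argument distinguished), I obtain
\[
\frac{d}{dt}R_{t}(z^{k}) = AR_{t}(z^{k}) - k\,e^{tA}Q_{k}\bigl(Ae^{-tA}z,(e^{-tA}z)^{k-1}\bigr).
\]
The key observation is that the polarized form of $R_{t}$ is $R_{t}(y_{1},\ldots,y_{k})=e^{tA}Q_{k}(e^{-tA}y_{1},\ldots,e^{-tA}y_{k})$, and since $A$ commutes with $e^{-tA}$, we get $R_{t}(Az,z^{k-1})=e^{tA}Q_{k}\bigl(Ae^{-tA}z,(e^{-tA}z)^{k-1}\bigr)$. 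Plugging this into the definition $B_{k}(R_{t})(z^{k})=kR_{t}(Az,z^{k-1})-AR_{t}(z^{k})$ and substituting back yields exactly $\frac{d}{dt}R_{t}=-B_{k}R_{t}$, as needed.

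Once \eqref{eq:A-A} is established, \eqref{eq:AB-A} follows by applying $e^{tB_{k}}$ to both sides, and \eqref{eq:AB} follows from \eqref{eq:AB-A} by the substitution $z\mapsto e^{tA}z$ (which is a bijection of $\mathbb{C}^{n}$, so the identity extends to all $z$). The only non-routine step is the commutation/polarization identity linking $e^{tA}Q_{k}(Ae^{-tA}z,(e^{-tA}z)^{k-1})$ to $R_{t}(Az,z^{k-1})$; everything else is bookkeeping for linear ODEs. I do not anticipate a serious obstacle, since the Banach space $\mathcal{P}^{k}(\mathbb{C}^{n})$ is finite dimensional and the operator $B_{k}$ is bounded, so the exponential $e^{-tB_{k}}$ is defined unambiguously for all $t\in\mathbb{R}$ and the uniqueness theorem applies.
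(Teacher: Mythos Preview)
Your proof is correct and follows essentially the same approach as the paper: both establish the result via an ODE/uniqueness argument in $\mathcal{P}^{k}(\mathbb{C}^{n})$, resting on the polarization identity that expresses $kQ_{k}(Az,z^{k-1})$ as the derivative of the pullback by $e^{-tA}$. The only cosmetic differences are that the paper introduces the auxiliary operator $A_{k}(Q)=AQ$, records $A_{k}B_{k}=B_{k}A_{k}$ and $(A_{k}+B_{k})Q_{k}(z^{k})=kQ_{k}(Az,z^{k-1})$, and then shows that $\phi(t)=e^{t(A_{k}+B_{k})}Q_{k}((e^{-tA}z)^{k})$ has vanishing derivative (thus proving \eqref{eq:AB-A} first), whereas you compute directly that $R_{t}$ satisfies $R_{t}'=-B_{k}R_{t}$ (thus proving \eqref{eq:A-A} first). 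One small wording point: your phrase ``applying $e^{tB_{k}}$ to both sides'' to deduce \eqref{eq:AB-A} is more accurately ``substituting $Q_{k}\mapsto e^{tB_{k}}Q_{k}$ in \eqref{eq:A-A}'', but the intended step is clear and valid.
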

\begin{proof}
Define $A_{k}$ on $\mathcal{P}^{k}\left(\mathbb{C}^{n}\right)$ by
$A_{k}\left(Q_{k}\left(z^{k}\right)\right)=AQ_{k}\left(z^{k}\right)$.
One easily sees that $e^{tA_{k}}\left(Q_{k}\left(z^{k}\right)\right)=e^{tA}Q_{k}\left(z^{k}\right)$,
$A_{k}B_{k}=B_{k}A_{k}$ and \[
\left(A_{k}+B_{k}\right)\left(Q_{k}\left(z^{k}\right)\right)=kQ_{k}\left(Az,z^{k-1}\right).\]

For \prettyref{eq:AB-A} it is enough to check that $\phi\left(t\right)=e^{t\left(A_{k}+B_{k}\right)}Q_{k}\left(\left(e^{-tA}z\right)^{k}\right)$
satisfies $\phi'\left(t\right)=0$. Indeed \begin{eqnarray*}
\phi'\left(t\right) & = & e^{t\left(A_{k}+B_{k}\right)}\left[\left(A_{k}+B_{k}\right)\left(Q_{k}\left(\left(e^{-tA}z\right)^{k}\right)\right)-kQ_{k}\left(e^{-tA}Az,\left(e^{-tA}z\right)^{k-1}\right)\right]=0.\end{eqnarray*}

The last two identities follow immediately from the first one.\end{proof}
\begin{lem}
\label{lem:unique initial data}If $F_{k},G_{k}:[0,\infty)\rightarrow\mathcal{P}^{k}\left(\mathbb{C}^{n}\right)$,
$k=2,\ldots,m$ are solutions of \prettyref{eq:coefficients} and
the limits \begin{eqnarray*}
f\left(z,s\right) & := & \lim_{t\rightarrow\infty}e^{tA}\left(v\left(z,s,t\right)+\sum_{k=2}^{m}F_{k}\left(v\left(z,s,t\right)^{k},t\right)\right)\\
g\left(z,s\right) & := & \lim_{t\rightarrow\infty}e^{tA}\left(v\left(z,s,t\right)+\sum_{k=2}^{m}G_{k}\left(v\left(z,s,t\right)^{k},t\right)\right)\end{eqnarray*}
exist locally uniformly in $z\in B^{n}$ for some $s\ge0$, then $f\left(\cdot,s\right)=g\left(\cdot,s\right)$
if and only if $F_{k}=G_{k}$, $k=2,\ldots,m$.\end{lem}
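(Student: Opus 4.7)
The ``if'' direction is immediate. For ``only if'', assume $f(\cdot,s)=g(\cdot,s)$ and prove $F_k=G_k$ by induction on $k=2,\dots,m$, comparing the degree-$k$ homogeneous Taylor coefficients in $z$ of the two limits. The exponential identities in \prettyref{lem:exp(A)exp(B)...} are then the right tool to collapse all $t$-dependence in the surviving term.

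For the inductive step, fix $k$, set $D_k:=F_k-G_k$, and suppose $F_j=G_j$ for $2\le j<k$ (vacuous when $k=2$). Since $N_k$ in \prettyref{eq:coefficients} depends only on the $H_j$'s and on the $F_j$'s with $j<k$, the induction hypothesis forces $N_k^F=N_k^G$, so $D_k$ satisfies the homogeneous ODE $dD_k/dt=B_k(D_k)$ and hence $D_k(t)=e^{tB_k}D_k(0)$. Expand $v(z,s,t)=e^{-(t-s)A}z+\sum_{j\ge 2}V_j(z^j,s,t)$ and pick out the degree-$k$ (in $z$) part of the pre-limit expression: it is a sum of terms $e^{tA}[D_j(v^j,t)]_k$, where terms with $j<k$ vanish by induction and terms with $j>k$ only contribute in degrees $\ge j>k$. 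What remains in the difference is $e^{tA}D_k\bigl((e^{-(t-s)A}z)^k,t\bigr)$. Substituting $D_k(t)=e^{tB_k}D_k(0)$ and applying \prettyref{eq:AB},
\[
e^{tA}D_k(t)\bigl((e^{-(t-s)A}z)^k\bigr)
= e^{tA}e^{tB_k}D_k(0)\bigl((e^{-(t-s)A}z)^k\bigr)
= D_k(0)\bigl((e^{sA}z)^k\bigr),
\]
which is independent of $t$. Since the degree-$k$ component of $f-g$ is zero and $e^{sA}$ is invertible, $D_k(0)=0$, and ODE uniqueness gives $D_k\equiv 0$.

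The only analytic ingredient is that extracting a homogeneous Taylor component commutes with the locally uniform limit, which is Cauchy's formula applied to the holomorphic limits $f(\cdot,s)$ and $g(\cdot,s)$. Beyond that, the argument is algebraic, and I expect the main effort to lie in the combinatorial bookkeeping that verifies at degree $k$ the only surviving contribution (under the induction hypothesis) is $e^{tA}D_k((e^{-(t-s)A}z)^k,t)$; once that is in hand, \prettyref{lem:exp(A)exp(B)...} makes the conclusion immediate.
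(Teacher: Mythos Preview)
Your proof is correct and follows essentially the same approach as the paper: both isolate the degree-$k$ (resp.\ $k_0$) Taylor coefficient of the difference using minimality/induction to kill lower-order terms, then apply the exponential identities of \prettyref{lem:exp(A)exp(B)...} to reduce to $D_k(0)=0$ and invoke ODE uniqueness. Your use of the homogeneous ODE for $D_k$ is a slight streamlining of the paper's variation-of-constants subtraction, but the argument is the same in substance.
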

\begin{proof}
Assume that $f\left(\cdot,s\right)=g\left(\cdot,s\right)$ and that
there exists $k\in\left\{ 2,\ldots,m\right\} $ such that $F_{k}\neq G_{k}$.
Let $k_{0}$ be the minimal such $k$. 

Equating the $k_{0}$-th coefficients of $f\left(\cdot,s\right)$
and $g\left(\cdot,s\right)$ and taking into account the minimality
of $k_{0}$ we get\begin{equation}
\lim_{t\rightarrow\infty}e^{tA}\left(F_{k_{0}}\left(\left(e^{\left(s-t\right)A}z\right)^{k_{0}},t\right)-G_{k_{0}}\left(\left(e^{\left(s-t\right)A}z\right)^{k_{0}},t\right)\right)=0.\label{eq:k0coeff}\end{equation}
Using \prettyref{eq:A-A} and the fact that $F_{k_{0}}$ is a solution
of \prettyref{eq:coefficients} we get\begin{eqnarray*}
e^{tA}F_{k_{0}}\left(\left(e^{\left(s-t\right)A}z\right)^{k_{0}},t\right) & = & e^{sA}e^{\left(s-t\right)B_{k}}F_{k_{0}}\left(z^{k_{0}},t\right)\\
 & = & e^{sA}e^{sB_{k}}\left(F_{k_{0}}\left(z^{k_{0}},0\right)+\int_{0}^{t}e^{-sB_{k_{0}}}N_{k_{0}}\left(z^{k_{0}},s\right)ds\right).\end{eqnarray*}
We can get an analogous identity for $G_{k_{0}}$ and then \prettyref{eq:k0coeff}
becomes\[
e^{sA}e^{sB_{k}}\left(F_{k_{0}}\left(z^{k_{0}},0\right)-G_{k_{0}}\left(z^{k_{0}},0\right)\right)=0.\]
Since $F_{k_{0}}$ and $G_{k_{0}}$ satisfy the same differential
equation with the same initial condition we have $F_{k_{0}}=G_{k_{0}}$,
thus reaching a contradiction.\end{proof}
\begin{lem}
\label{lem:growth estimate}If $P$ is a polynomial such that $P\left(t\right)\ge0$
for $t\ge s$ then \[
\int_{s}^{\infty}P\left(t\right)\left\Vert e^{\left(t-s\right)A}\right\Vert \frac{\left\Vert v\left(z,s,t\right)\right\Vert ^{n_{0}+1}}{\left(1-\left\Vert v\left(z,s,t\right)\right\Vert \right)^{2}}dt\le\frac{Q_{\epsilon,A,P}\left(s\right)}{\left(1-\left\Vert z\right\Vert \right)^{2\frac{k_{+}\left(A\right)}{m\left(A\right)}+\epsilon}},\,\epsilon>0\]
where $Q_{\epsilon,A,P}$ is a polynomial of the same degree as $P$.\end{lem}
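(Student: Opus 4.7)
The plan is to exploit the transition inequality \prettyref{eq:transition inequality}, rewritten as $\phi(\|v(z,s,t)\|) \le e^{m(A)(s-t)}\phi(\|z\|)$ with $\phi(u) = u/(1-u)^2$, raised to a carefully chosen non-integer power $\beta = k_+(A)/m(A) + \epsilon/2$. The point of this choice is that $\beta m(A) - k_+(A) = \epsilon m(A)/2 > 0$, which supplies precisely the margin needed to absorb the growth of $\|e^{(t-s)A}\|$ and leave a decaying exponential.

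First I would verify $\beta \in [1,\,n_0+1]$ for $\epsilon$ sufficiently small. Testing $\Re\langle Az,z\rangle$ at a unit eigenvector gives $m(A) \le \min \Re \sigma(A) \le k_+(A)$, hence $\beta \ge 1$. Since $k_+(A)/m(A) < n_0+1$ by definition of $n_0$, one also has $\beta < n_0+1$ once $\epsilon$ is below an explicit threshold; the lemma for larger $\epsilon$ then follows from the case of smaller $\epsilon$ because the right-hand side is monotone increasing in $\epsilon$. The main algebraic step is the pointwise identity and estimate
\[\frac{\|v\|^{n_0+1}}{(1-\|v\|)^2} = \|v\|^{n_0+1-\beta}(1-\|v\|)^{2\beta-2}\phi(\|v\|)^\beta \le \phi(\|v\|)^\beta,\]
valid because both $\|v\|^{n_0+1-\beta} \le 1$ and $(1-\|v\|)^{2\beta-2} \le 1$ (using $\beta \in [1,\,n_0+1]$ and $\|v\|<1$). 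Raising the transition inequality to the power $\beta$ then gives $\phi(\|v\|)^\beta \le e^{\beta m(A)(s-t)}\phi(\|z\|)^\beta$.

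Finally I would apply \prettyref{eq:Banach exp bound} with $\delta = \epsilon m(A)/4$ to get $\|e^{(t-s)A}\| \le C_\delta e^{(t-s)(k_+(A)+\delta)}$, reducing the integral to $C_\delta\phi(\|z\|)^\beta \int_s^\infty P(t) e^{-(t-s)\epsilon m(A)/4}\,dt$. Substituting $\tau = t-s$ turns this into $\int_0^\infty P(s+\tau)e^{-\tau\epsilon m(A)/4}\,d\tau$, which expands to a polynomial in $s$ of the same degree as $P$, and the prefactor satisfies $\phi(\|z\|)^\beta \le (1-\|z\|)^{-2\beta} = (1-\|z\|)^{-(2k_+(A)/m(A)+\epsilon)}$, matching the stated bound. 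The main obstacle is committing to a non-integer exponent $\beta$---the integer candidates $n_0+1$ or $\lceil k_+(A)/m(A)\rceil$ would yield $(1-\|z\|)^{-2(n_0+1)}$, the wrong power of $1-\|z\|$.
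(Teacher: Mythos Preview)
Your proof is correct and is essentially the same argument as the paper's: both introduce the exponent $\beta=\alpha=k_{+}(A)/m(A)+\epsilon/2$, bound $\|v\|^{n_0+1}/(1-\|v\|)^2$ by $\phi(\|v\|)^\alpha$ (using $1\le\alpha<n_0+1$), raise the transition inequality to the power~$\alpha$, and then absorb $\|e^{(t-s)A}\|$ via \prettyref{eq:Banach exp bound} to leave a decaying exponential against $P(t)$. Your write-up is slightly more explicit in verifying $\beta\ge 1$ and in displaying the algebraic identity, but the strategy and all the estimates coincide with the paper's proof.
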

\begin{proof}
Let\[
\alpha=\frac{k_{+}\left(A\right)}{m\left(A\right)}+\frac{\epsilon}{2}.\]
 We can restrict to the case when $\epsilon$ is small enough so that
$\alpha<n_{0}+1$. Using \prettyref{eq:transition inequality} we
see that\[
\frac{\left\Vert v\left(z,s,t\right)\right\Vert ^{n_{0}+1}}{\left(1-\left\Vert v\left(z,s,t\right)\right\Vert \right)^{2}}\le\frac{\left\Vert v\left(z,s,t\right)\right\Vert ^{\alpha}}{\left(1-\left\Vert v\left(z,s,t\right)\right\Vert \right)^{2}}\le\frac{e^{\left(s-t\right)\alpha m\left(A\right)}}{\left(1-\left\Vert z\right\Vert \right)^{2\alpha}}.\]

Let $\epsilon'$ be small enough so that\[
\left\Vert e^{\left(t-s\right)A}\right\Vert \le C_{\epsilon'}e^{\left(t-s\right)\left(k_{+}\left(A\right)+\epsilon'\right)}\]
and $\delta:=\alpha m\left(A\right)-k_{+}\left(A\right)-\epsilon'>0$.
Then \[
\int_{s}^{\infty}P\left(t\right)\left\Vert e^{\left(t-s\right)A}\right\Vert \frac{\left\Vert v\left(z,s,t\right)\right\Vert ^{n_{0}+1}}{\left(1-\left\Vert v\left(z,s,t\right)\right\Vert \right)^{2}}dt\le\frac{C_{\epsilon'}}{\left(1-\left\Vert z\right\Vert \right)^{2\alpha}}\int_{s}^{\infty}P\left(t\right)e^{-\delta(t-s)}dt\]
and it is not hard to see that \[
Q_{\epsilon,A,P}\left(s\right):=C_{\epsilon'}\int_{s}^{\infty}P\left(t\right)e^{-\delta\left(t-s\right)}dt\]
satisfies our requirements.\end{proof}
\begin{rem}
\label{rem:growth sharpness}When $A$ is normal, $P$ is constant
and $k_{+}\left(A\right)/m\left(A\right)>1$ we can sharpen the above
bound by letting $\epsilon=0$. 

Let $\beta=n_{0}+1-\alpha$, then using \prettyref{eq:transition inequality}
we get \[
\frac{\left\Vert v\left(z,s,t\right)\right\Vert ^{n_{0}+1}}{\left(1-\left\Vert v\left(z,s,t\right)\right\Vert \right)^{2}}\le\frac{e^{\left(s-t\right)\alpha m\left(A\right)}}{\left(1-\left\Vert z\right\Vert \right)^{2\alpha}}\left\Vert v\left(z,s,t\right)\right\Vert ^{\beta}\left(1-\left\Vert v\left(z,s,t\right)\right\Vert \right)^{2\left(\alpha-1\right)}.\]
If $A$ is normal we know that \[
\left\Vert e^{\left(t-s\right)A}\right\Vert =e^{\left(t-s\right)k_{+}\left(A\right)}\]
and hence we get \begin{multline*}
\int_{s}^{\infty}\left\Vert e^{\left(t-s\right)A}\right\Vert \frac{\left\Vert v\left(z,s,t\right)\right\Vert ^{n_{0}+1}}{\left(1-\left\Vert v\left(z,s,t\right)\right\Vert \right)^{2}}dt\\
\le\frac{1}{\left(1-\left\Vert z\right\Vert \right)^{2\alpha}}\int_{s}^{\infty}\left\Vert v\left(z,s,t\right)\right\Vert ^{\beta}\left(1-\left\Vert v\left(z,s,t\right)\right\Vert \right)^{2\left(\alpha-1\right)}dt.\end{multline*}

From the proof of \cite[Theorem 2.1]{MR2438427} we know that \[
-\frac{1+\left\Vert v\left(z,s,t\right)\right\Vert }{1-\left\Vert v\left(z,s,t\right)\right\Vert }\frac{1}{\left\Vert v\left(z,s,t\right)\right\Vert }\frac{d\left\Vert v\left(z,s,t\right)\right\Vert }{dt}\ge m\left(A\right).\]
Using the above inequality it is easy to conclude that\begin{multline*}
\int_{s}^{\infty}\left\Vert v\left(z,s,t\right)\right\Vert ^{\beta}\left(1-\left\Vert v\left(z,s,t\right)\right\Vert \right)^{2\left(\alpha-1\right)}dt\\
\le-\frac{2}{m\left(A\right)}\int_{s}^{\infty}\left\Vert v\left(z,s,t\right)\right\Vert ^{\beta-1}\left(1-\left\Vert v\left(z,s,t\right)\right\Vert \right)^{2\alpha-3}\frac{d\left\Vert v\left(z,s,t\right)\right\Vert }{dt}dt\\
=\frac{2}{m\left(A\right)}\int_{0}^{\left\Vert z\right\Vert }u^{\beta-1}\left(1-u\right)^{2\alpha-3}du\\
\le\frac{2}{m\left(A\right)}\int_{0}^{1}u^{\beta-1}\left(1-u\right)^{2\alpha-3}du.\end{multline*}
The last integral converges because by our assumptions $\beta-1>-1$
and $2\alpha-3>-1$. This completes the proof of our claim.\end{rem}
\begin{lem}
\label{lem:univalence}If $F_{k}:[0,\infty)\rightarrow\mathcal{P}^{k}\left(\mathbb{C}^{n}\right)$,
$k=2,\ldots,m$ are polynomially bounded and the limit \[
f\left(z,s\right)=\lim_{t\rightarrow\infty}e^{tA}\left(v\left(z,s,t\right)+\sum_{k=2}^{m}F_{k}\left(v\left(z,s,t\right)^{k},t\right)\right)\]
exists locally uniformly in $z\in B^{n}$ for some $s\ge0$ then $f\left(\cdot,s\right)$
is univalent.\end{lem}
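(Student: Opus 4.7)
The plan is to exhibit $f(\cdot,s)$ as a locally uniform limit of explicit univalent mappings and then invoke the several-variable Hurwitz theorem (a locally uniform limit of univalent holomorphic mappings of $B^n$ into $\mb C^n$ is either univalent or has identically degenerate Jacobian). Define
\[
g_t(w):=w+\sum_{k=2}^m F_k(w^k,t),\qquad \tilde f_t(z):=e^{tA}\,g_t\bigl(v(z,s,t)\bigr),
\]
so that $\tilde f_t\to f(\cdot,s)$ locally uniformly by hypothesis, and $\tilde f_t=e^{tA}\circ g_t\circ v(\cdot,s,t)$. The outer factor is a linear isomorphism and the inner factor $v(\cdot,s,t)$ is univalent on $B^n$, so it will suffice to prove that $g_t$ is univalent on the shrinking set $v(\overline{B^n_r},s,t)$ for each $r\in(0,1)$ and all sufficiently large $t$.

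The central estimate is a race between polynomial growth and exponential decay. Polynomial boundedness of $F_2,\dots,F_m$ furnishes a single polynomial $P$ with $\|F_k(\cdot,t)\|\le P(t)$ for $k=2,\dots,m$, and the product rule then yields
\[
\|Dg_t(w)-I\|\le\sum_{k=2}^m k\,\|F_k(\cdot,t)\|\,\|w\|^{k-1}\le C\,P(t)\,\|w\|,\qquad \|w\|\le 1.
\]
From \prettyref{eq:transition inequality} one gets $\|v(z,s,t)\|\le C_r e^{-m(A)(t-s)}$ for $z\in\overline{B^n_r}$ and $t$ large, hence $\|Dg_t(w)-I\|\le C_{r,s}\,P(t)\,e^{-m(A)(t-s)}\to 0$ on $v(\overline{B^n_r},s,t)$ as $t\to\infty$. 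For $t$ sufficiently large this bound is $<1/2$ on the convex ball $\{\|w\|\le C_r e^{-m(A)(t-s)}\}$, and the standard mean value estimate $\|g_t(w_1)-g_t(w_2)\|\ge\tfrac12\|w_1-w_2\|$ on that ball gives univalence of $g_t$ there. Composing, $\tilde f_t$ is univalent on $\overline{B^n_r}$ for every $t$ large enough.

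It remains to exclude the degenerate alternative in Hurwitz's theorem. Differentiating \prettyref{eq:TLE} at $z=0$ gives $v(0,s,t)=0$ and $Dv(0,s,t)=e^{(s-t)A}$, so that $D\tilde f_t(0)=e^{sA}$ for every $t$; passing to the limit yields $Df(0,s)=e^{sA}$, which is invertible. Hence $f(\cdot,s)$ is univalent on $B^n_r$, and since $r<1$ is arbitrary, on all of $B^n$. The one step that I expect to require care is the quantitative comparison between $P(t)$ and $e^{-m(A)(t-s)}$ on the relevant convex neighborhood of $v(\overline{B^n_r},s,t)$: this is precisely the point where the polynomial boundedness hypothesis is essential, since the exponential contraction of the transition mapping must outrun the (fixed-degree but otherwise unconstrained) polynomial growth of the coefficient family.
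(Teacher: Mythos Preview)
Your proof is correct and follows essentially the same route as the paper: show that for each $r<1$ and $t$ large the approximant $e^{tA}g_t(v(\cdot,s,t))$ is univalent on $\overline{B^n_r}$ because $g_t$ is a small perturbation of the identity on the image of $v(\cdot,s,t)$ (the race $P(t)e^{-m(A)(t-s)}\to 0$), then pass to the limit via Hurwitz. The only cosmetic difference is that the paper obtains the injectivity of $g_t$ from the telescoping identity $Q(z^k)-Q(w^k)=\sum_{j=0}^{k-1}Q(z-w,z^j,w^{k-1-j})$ applied directly to $v(z_1,s,t),v(z_2,s,t)$, whereas you bound $\|Dg_t-I\|$ on a convex ball and use the mean value inequality; these are equivalent, and your explicit computation $Df(0,s)=e^{sA}$ to rule out the degenerate Hurwitz alternative is exactly what the paper's ``the conclusion follows easily'' is hiding.
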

\begin{proof}
First note that if $Q\in\mc P^{k}\left(\mb C^{n}\right)$ then\begin{eqnarray}
\left\Vert Q\left(z^{k}\right)-Q\left(w^{k}\right)\right\Vert  & = & \left\Vert \sum_{j=0}^{k-1}Q\left(z-w,z^{j},w^{k-1-j}\right)\right\Vert \nonumber \\
 & \le & \left\Vert Q\right\Vert \left\Vert z-w\right\Vert \sum_{j=0}^{k-1}\left\Vert z\right\Vert ^{j}\left\Vert w\right\Vert ^{k-1-j}.\label{eq:F(zk)-F(wk)}\end{eqnarray}
Using the above and \prettyref{eq:transition inequality} we see that\begin{multline*}
\left\Vert \sum_{k=2}^{m}F_{k}\left(v\left(z_{1},s,t\right)^{k},t\right)-\sum_{k=2}^{m}F_{k}\left(v\left(z_{2},s,t\right)^{k},t\right)\right\Vert \\
\le C_{r}P\left(t\right)e^{\left(s-t\right)m\left(A\right)}\left\Vert v\left(z_{1},s,t\right)-v\left(z_{2},s,t\right)\right\Vert ,\,\left\Vert z_{1}\right\Vert ,\left\Vert z_{2}\right\Vert \le r\end{multline*}
where $P$ is a polynomial bound on $F_{k}$, $k=2,\ldots,m$. For
sufficiently large $t$ we get\begin{multline*}
\left\Vert \sum_{k=2}^{m}F_{k}\left(v\left(z_{1},s,t\right)^{k},t\right)-\sum_{k=2}^{m}F_{k}\left(v\left(z_{2},s,t\right)^{k},t\right)\right\Vert \\
<\left\Vert v\left(z_{1},s,t\right)-v\left(z_{2},s,t\right)\right\Vert ,\,\left\Vert z_{1}\right\Vert ,\left\Vert z_{2}\right\Vert \le r\end{multline*}
 which implies that for sufficiently large $t$ \[
v\left(z,s,t\right)+\sum_{k=2}^{m}F_{k}\left(v\left(z,s,t\right)^{k},t\right)\]
is univalent on the ball $\left\Vert z\right\Vert \le r$. Now the
conclusion follows easily.
\end{proof}
The following consequence together with \prettyref{thm:polybounded solutions}
generalizes \cite[Theorem 2.6]{MR2438427}.
\begin{cor}
\label{cor:polybounded -> Loewner}All polynomially bounded solutions
of \prettyref{eq:LE1} are Loewner chains.\end{cor}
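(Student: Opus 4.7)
The plan is to verify directly the two defining properties of a Loewner chain, using the previous results as black boxes: univalence of each slice $f(\cdot,s)$, and the subordination identity $f(z,s)=f(v(z,s,t),t)$ with $v$ supplied by the transition equation \prettyref{eq:TLE}.

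First I would transfer the polynomial boundedness assumption from $f$ to its coefficient maps $F_k$. Fix a compact $K\subset B^n$ and a polydisc or ball slightly larger than $K$ inside $B^n$; applying Cauchy's inequality on this intermediate ball to $e^{-tA}f(\cdot,t)$ yields bounds of the form $\|F_k(\cdot,t)\|\le C_r r^{-k}P(t)$ for some fixed polynomial $P$, so each $F_k\colon[0,\infty)\to\mc P^k(\mb C^n)$ is polynomially bounded in the sense required by the earlier lemmas. Consequently \prettyref{pro:normal structure} delivers the representation
\[
f(z,s)=\lim_{t\to\infty}e^{tA}\Bigl(v(z,s,t)+\sum_{k=2}^{n_{0}}F_{k}(v(z,s,t)^{k},t)\Bigr)
\]
locally uniformly in $z$, and then \prettyref{lem:univalence} (applied to the truncated chain with $m=n_{0}$) immediately forces $f(\cdot,s)$ to be univalent for every $s\ge 0$.

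Next, for the subordination relation, fix $z\in B^n$ and $s\ge 0$ and set $\phi(t):=f(v(z,s,t),t)$ for $t\ge s$. Because $v(z,s,\cdot)$ is Lipschitz on $[s,\infty)$ (locally uniformly in $z$) and $f$ is locally absolutely continuous in $t$, locally uniformly in $z$, the map $\phi$ is absolutely continuous. Combining \prettyref{eq:LE1} and \prettyref{eq:TLE} via the chain rule gives
\[
\phi'(t)=Df(v(z,s,t),t)\,\frac{\partial v}{\partial t}(z,s,t)+\frac{\partial f}{\partial t}(v(z,s,t),t)=-Df(v,t)h(v,t)+Df(v,t)h(v,t)=0
\]
almost everywhere. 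Hence $\phi$ is constant and $f(z,s)=\phi(s)=\phi(t)=f(v(z,s,t),t)$. Since $v(\cdot,s,t)$ is a univalent Schwarz self-map of $B^n$ fixing $0$, this identifies $v$ as the transition mapping and shows that $f$ is a subordination chain; together with the univalence of each slice we conclude that $f$ is a Loewner chain.

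The one step that requires genuine care is the chain-rule computation, since \prettyref{eq:LE1} holds only at almost every $t$ for each fixed $z$, whereas in $\phi'(t)$ the basepoint varies with $t$ along the curve $t\mapsto v(z,s,t)$. This is the standard obstacle in the Loewner PDE literature and is handled by exploiting measurability of $h(\cdot,\cdot)$ together with the joint continuity of $f$ (coming from holomorphy in $z$ plus the stated absolute continuity in $t$ locally uniformly in $z$), allowing the null set of bad times to be chosen independently of $z$ along the curve. Apart from this technicality, the corollary is a direct combination of \prettyref{pro:normal structure} and \prettyref{lem:univalence} with the transition equation \prettyref{eq:TLE}.
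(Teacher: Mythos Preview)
Your proof is correct and follows essentially the paper's route: univalence of each slice via \prettyref{pro:normal structure} together with \prettyref{lem:univalence}, plus the subordination identity $f(z,s)=f(v(z,s,t),t)$. The paper's proof is the one-liner ``follows from \prettyref{pro:normal structure} and \prettyref{lem:univalence}'', with the subordination identity treated as standard (indeed it is already used as the first line in the proof of \prettyref{pro:normal structure}).

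One remark on the subordination step. You derive it by the direct chain-rule computation and correctly flag the a.e.\ issue. An alternative, used later in the paper in the proof of \prettyref{thm:polybounded solutions}, avoids this technicality entirely: once \prettyref{pro:normal structure} gives
\[
f(z,s)=\lim_{\tau\to\infty}e^{\tau A}\Bigl(v(z,s,\tau)+\sum_{k=2}^{n_{0}}F_{k}(v(z,s,\tau)^{k},\tau)\Bigr),
\]
the semigroup property $v(z,s,\tau)=v(v(z,s,t),t,\tau)$ for $s\le t\le\tau$ immediately yields $f(z,s)=f(v(z,s,t),t)$, with no differentiation in $t$ along moving basepoints. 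Either argument is fine; the semigroup version is shorter and sidesteps the measure-theoretic subtlety you mention.
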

\begin{proof}
This follows from \prettyref{pro:normal structure} and \prettyref{lem:univalence}.\end{proof}
\begin{thm}
\label{thm:polybounded solutions}If $F_{k}$, $k=2,\ldots,n_{0}$
are polynomially bounded solutions of \prettyref{eq:coefficients}
then\[
g\left(z,s\right):=\lim_{t\rightarrow\infty}e^{tA}\left(v\left(z,s,t\right)+\sum_{k=2}^{n_{0}}F_{k}\left(v\left(z,s,t\right)^{k},t\right)\right)\]
exists locally uniformly with respect to $z$ and is a polynomially
bounded Loewner chain solution of \prettyref{eq:LE1}. If $F\left(t\right)$
is a polynomial bound for $F_{k}$, $k=2,\ldots,n_{0}$ then, given
$\epsilon>0$, there exists a polynomial $Q_{\epsilon,A,F}$ of the
same degree as $F$ such that\[
\left\Vert e^{-tA}g\left(z,t\right)\right\Vert \le\frac{Q_{\epsilon,A,F}\left(t\right)}{\left(1-\left\Vert z\right\Vert \right)^{2\frac{k_{+}\left(A\right)}{m\left(A\right)}+\epsilon}},\, z\in B^{n},t\ge0.\]
 Furthermore, if \[
g\left(z,t\right)=e^{tA}\left(z+\sum_{k=2}^{\infty}G_{k}\left(z^{k},t\right)\right)\]
then $G_{k}=F_{k}$, $k=2,\ldots,n_{0}$. \end{thm}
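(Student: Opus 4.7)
The plan is to define $\tilde g(z,s,t) := e^{tA}\bigl(v(z,s,t) + \sum_{k=2}^{n_0} F_k(v(z,s,t)^k, t)\bigr)$ and proceed in three stages: (a) prove $\tilde g$ is Cauchy as $t \to \infty$, giving the existence of $g$ together with the growth bound; (b) show $g$ is a Loewner chain solving \prettyref{eq:LE1}; (c) identify its first $n_0$ coefficients with the $F_k$. For (a), set $\phi(z,t) := z + \sum_{k=2}^{n_0} F_k(z^k, t)$ so $\tilde g = e^{tA}\phi(v,t)$. Differentiating in $t$ and using $\partial_t v = -h(v,t)$ gives $\partial_t \tilde g = e^{tA}\bigl[A\phi + \partial_t\phi - D\phi(v,t)\cdot h(v,t)\bigr]$. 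The coefficient ODE \prettyref{eq:coefficients}, combined with the identity $B_k(F_k(v^k)) + AF_k(v^k) = kF_k(Av, v^{k-1})$, rewrites $A\phi + \partial_t\phi$ as $Av + \sum_{k=2}^{n_0}\bigl[kF_k(Av,v^{k-1}) + N_k(v^k,t)\bigr]$. Expanding $D\phi(v,t)\cdot h = h(v,t) + \sum_{k=2}^{n_0} k F_k(h(v,t), v^{k-1})$ and collecting by homogeneous degree in $v$, the very definition of $N_k$ makes every term of degree $\le n_0$ cancel; hence $\partial_t \tilde g(z,s,t) = -e^{tA} R(v(z,s,t), t)$, where $R(z,t)$ is a holomorphic mapping built linearly from the tails of $h(\cdot,t)$ and from the $F_k$, with Taylor expansion at $0$ starting in degree $n_0+1$.

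Standard Carathéodory estimates for mappings in $\mc N_A$, combined with the polynomial bound $F(t)$ on the $F_k$, yield $\|R(z,t)\| \le P(t)\,\|z\|^{n_0+1}/(1-\|z\|)^2$ on $B^n$ with $P$ a polynomial of the same degree as $F$. Writing $\tilde g(z,s,t_2) - \tilde g(z,s,t_1) = -\int_{t_1}^{t_2} e^{\tau A} R(v(z,s,\tau),\tau)\,d\tau$ and factoring $e^{sA}$, \prettyref{lem:growth estimate} shows the integrand is absolutely integrable on $[s,\infty)$ locally uniformly in $z$, so $g(z,s) := \lim_{t\to\infty}\tilde g(z,s,t)$ exists locally uniformly. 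Rearranging yields $e^{-sA} g(z,s) = z + \sum_{k=2}^{n_0} F_k(z^k,s) - \int_s^\infty e^{(\tau-s) A} R(v(z,s,\tau),\tau)\,d\tau$, and another application of \prettyref{lem:growth estimate} gives the bound $\|e^{-tA}g(z,t)\| \le Q_{\epsilon,A,F}(t)/(1-\|z\|)^{2k_+(A)/m(A)+\epsilon}$ with $Q_{\epsilon,A,F}$ a polynomial of the same degree as $F$.

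For (b), univalence of $g(\cdot,s)$ is \prettyref{lem:univalence}. The subordination identity $g(z,s) = g(v(z,s,s'),s')$ for $s \le s'$ follows by inserting the cocycle relation $v(z,s,t) = v(v(z,s,s'),s',t)$ into the definition of $\tilde g$ and passing to the limit, so $g$ is a Loewner chain. For the PDE \prettyref{eq:LE1}, differentiating this cocycle in $s$ at $s' = s$ (using $v(z,s,s) = z$ and $\partial_t v(z,s,s) = -h(z,s)$) gives $\partial_s v(z,s,t) = Dv(z,s,t)\,h(z,s)$; hence $\partial_s \tilde g(z,s,t) = D_z \tilde g(z,s,t)\, h(z,s)$. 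Integrating in $s$ and letting $t \to \infty$ (with $D_z \tilde g \to Dg$ locally uniformly by the Weierstrass theorem) yields $\partial_s g(z,s) = Dg(z,s)\,h(z,s)$ a.e. For (c), $g$ is a polynomially bounded solution of \prettyref{eq:LE1} by the preceding step, so \prettyref{pro:normal structure} yields $g(z,s) = \lim_{t\to\infty} e^{tA}\bigl(v(z,s,t) + \sum_{k=2}^{n_0} G_k(v(z,s,t)^k, t)\bigr)$ where $G_k$ are the Taylor coefficients of $g$; these satisfy \prettyref{eq:coefficients} by matching coefficients in \prettyref{eq:LE1}, and \prettyref{lem:unique initial data} then forces $G_k = F_k$ for $2 \le k \le n_0$.

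The main obstacle is the bookkeeping cancellation at the start of step (a): one must verify that $N_k$, as defined in \prettyref{eq:coefficients}, is precisely the combination needed so that every term of degree $\le n_0$ in $A\phi + \partial_t\phi - D\phi\cdot h$ vanishes. Once this is checked, the remainder reduces to direct applications of \prettyref{lem:growth estimate}, \prettyref{lem:univalence}, \prettyref{pro:normal structure} and \prettyref{lem:unique initial data}.
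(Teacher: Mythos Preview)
Your proposal is correct and follows essentially the same route as the paper: compute $\partial_t \tilde g$, observe that the coefficient equations force all terms of degree $\le n_0$ to cancel so that only a remainder of order $\ge n_0+1$ survives, then invoke \prettyref{lem:growth estimate} for existence and the growth bound, \prettyref{lem:univalence} for univalence, the semigroup property of $v$ for subordination, and \prettyref{pro:normal structure} together with \prettyref{lem:unique initial data} for the identification $G_k=F_k$. The only noticeable difference is in how you obtain the PDE \prettyref{eq:LE1}: the paper differentiates the already-established subordination identity $g(v(z,s,t),t)=g(z,s)$ in $t$ and then lets $s\nearrow t$, whereas you differentiate $\tilde g$ in $s$ (via $\partial_s v(z,s,t)=Dv(z,s,t)\,h(z,s)$) and pass to the limit $t\to\infty$ under the integral; both are standard and valid, though your route needs a word on dominated convergence for $D_z\tilde g$, which the uniform bound you already have on $\tilde g$ supplies.
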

\begin{proof}
Let \[
u\left(z,s,t\right)=e^{tA}\left(v\left(z,s,t\right)+\sum_{k=2}^{n_{0}}F_{k}\left(v\left(z,s,t\right)^{k},t\right)\right).\]
We begin by showing that $\lim_{t\rightarrow\infty}u\left(z,s,t\right)$
exists locally uniformly.

It is easy to see that $u\left(z,s,t\right)$ is locally absolutely
continuous in $t$, so\begin{equation}
\left\Vert u\left(z,s,t_{1}\right)-u\left(z,s,t_{2}\right)\right\Vert =\left\Vert \int_{t_{1}}^{t_{2}}\frac{\partial u}{\partial t}\left(z,s,t\right)dt\right\Vert \le\int_{t_{1}}^{t_{2}}\left\Vert \frac{\partial u}{\partial t}\left(z,s,t\right)\right\Vert dt,\, s\le t_{1}\le t_{2}.\label{eq:cauchy estimate for u}\end{equation}
Now

\begin{multline*}
\frac{\partial u}{\partial t}\left(z,s,t\right)=e^{tA}A\left(v\left(z,s,t\right)+\sum_{k=2}^{n_{0}}F_{k}\left(v\left(z,s,t\right)^{k},t\right)\right)-e^{tA}\Biggl(h\left(v\left(z,s,t\right),t\right)+\\
\sum_{k=2}^{n_{0}}kF_{k}\left(v\left(z,s,t\right)^{k-1},h\left(v\left(z,s,t\right),t\right),t\right)-\sum_{k=2}^{n_{0}}\frac{dF_{k}}{dt}\left(v\left(z,s,t\right)^{k},t\right)\Biggr).\end{multline*}

Let\[
R\left(z,t\right)=h\left(z,t\right)-Az-\sum_{k=2}^{n_{0}}H_{k}\left(z^{k},t\right).\]
 Similarly to the proof of \cite[Theorem 4.4]{MR1104523}, a straightforward
computation using the assumption that $F_{k}$, $k=2,\ldots,n_{0}$
satisfy \prettyref{eq:coefficients}, leads to\begin{eqnarray}
\frac{\partial u}{\partial t}\left(z,s,t\right) & = & -e^{tA}\left(R\left(v\left(z,s,t\right),t\right)-\sum_{k=2}^{n_{0}}kF_{k}\left(v\left(z,s,t\right)^{k-1},R\left(v\left(z,s,t\right),t\right)\right)\right)\nonumber \\
 &  & -e^{tA}\left(\sum_{k=2}^{n_{0}}\sum_{l=n_{0}-k+2}^{n_{0}}kF_{k}\left(v\left(z,s,t\right)^{k-1},H_{l}\left(v\left(z,s,t\right)^{l},t\right),t\right)\right).\label{eq:du/dt}\end{eqnarray}
Using the ideas from the proof of \cite[Theorem 1.2]{MR1892999} (cf.
\cite[Lemma 1.2]{MR2438427}) we get \begin{equation}
\left\Vert R\left(z,t\right)\right\Vert \le C_{A}\frac{\left\Vert z\right\Vert ^{n_{0}+1}}{\left(1-\left\Vert z\right\Vert \right)^{2}}\label{eq:Rh estimate}\end{equation}
From \prettyref{eq:du/dt}, \prettyref{eq:Rh estimate} and \prettyref{eq:transition inequality}
we get\[
\left\Vert \frac{\partial u}{\partial t}\left(z,s,t\right)\right\Vert \le P\left(t\right)\left\Vert e^{tA}\right\Vert \frac{\left\Vert v\left(z,s,t\right)\right\Vert ^{n_{0}+1}}{\left(1-\left\Vert v\left(z,s,t\right)\right\Vert \right)^{2}}\]
where $P$ is a polynomial depending only on $F$ and $A$ (because
the bounds on $H_{k}$ can be chosen to depend only on $A$). Substituting
this estimate into \prettyref{eq:cauchy estimate for u} we get \[
\left\Vert u\left(z,s,t_{1}\right)-u\left(z,s,t_{2}\right)\right\Vert \le\left\Vert e^{sA}\right\Vert \int_{t_{1}}^{t_{2}}P\left(t\right)\left\Vert e^{\left(t-s\right)A}\right\Vert \frac{\left\Vert v\left(z,s,t\right)\right\Vert ^{n_{0}+1}}{\left(1-\left\Vert v\left(z,s,t\right)\right\Vert \right)^{2}}dt.\]
Using \prettyref{lem:growth estimate} we can now conclude that $\lim_{t\rightarrow\infty}u\left(z,s,t\right)$
exists uniformly on compact subsets. 

From the semigroup property for $v$ we immediately get that \begin{equation}
g\left(v\left(z,s,t\right),t\right)=g\left(z,s\right),\,0\le s\le t\label{eq:Loewner transition}\end{equation}
and by \prettyref{lem:univalence} we can conclude that $g\left(z,t\right)$
is a Loewner chain. Differentiating \prettyref{eq:Loewner transition}
with respect to $t$ and then letting $s\nearrow t$ we see that $g$
is a solution of \prettyref{eq:LE1}.

By the same considerations as above we get\[
\left\Vert e^{-sA}\left(u\left(z,s,t_{1}\right)-u\left(z,s,t_{2}\right)\right)\right\Vert \le\int_{t_{1}}^{t_{2}}P\left(t\right)\left\Vert e^{\left(t-s\right)A}\right\Vert \frac{\left\Vert v\left(z,s,t\right)\right\Vert ^{n_{0}+1}}{\left(1-\left\Vert v\left(z,s,t\right)\right\Vert \right)^{2}}dt.\]
Letting $t_{1}=s$ and $t_{2}\rightarrow\infty$ and using \prettyref{lem:growth estimate}
we get\begin{eqnarray}
\left\Vert e^{-sA}g\left(z,s\right)\right\Vert  & \le & \left\Vert z\right\Vert +\int_{s}^{\infty}P\left(t\right)\left\Vert e^{\left(t-s\right)A}\right\Vert \frac{\left\Vert v\left(z,s,t\right)\right\Vert ^{n_{0}+1}}{\left(1-\left\Vert v\left(z,s,t\right)\right\Vert \right)^{2}}dt\nonumber \\
 & \le & \frac{Q_{\epsilon,A,F}\left(s\right)}{\left(1-\left\Vert z\right\Vert \right)^{2\frac{k_{+}\left(A\right)}{m\left(A\right)}+\epsilon}},\,\epsilon>0\label{eq:Loewner growth}\end{eqnarray}
(P depends on $A$ and $F$). The above guarantees that the solution
is polynomially bounded.

The last statement follows from \prettyref{pro:normal structure}
and \prettyref{lem:unique initial data}.
\end{proof}
For the proof of \prettyref{thm:existence of solutions} we will need
some basic facts about ordinary differential equations. We will use
\cite{MR0352639} for this, but we are interested only in the finite
dimensional case. 

Let $X$ be a finite dimensional Banach space and $L$ be a bounded
linear operator on $X$. We consider the equation\begin{equation}
\frac{dx}{dt}=Lx+f\left(t\right),\,\mathrm{a.e.}\, t\ge0\label{eq:inhomo}\end{equation}
where $f:[0,\infty)\rightarrow X$ is a locally Lebesgue integrable
function. We know that any (locally absolutely continuous) solution
of \prettyref{eq:inhomo} is of the form\[
x\left(t\right)=e^{tL}x\left(0\right)+\int_{0}^{t}e^{\left(t-s\right)L}f\left(s\right)ds.\]
Note that the local Lebesgue integrability of $f$ is needed to ensure
the differentiability of the solution above, which follows from the
Lebesgue differentiation theorem (see \cite[Theorem 3.21]{MR1681462}).

We will use the following notations\begin{eqnarray*}
\sigma_{+}\left(L\right) & = & \left\{ \lambda\in\sigma\left(L\right):\Re\lambda>0\right\} \\
\sigma_{\le}\left(L\right) & = & \left\{ \lambda\in\sigma\left(L\right):\Re\lambda\le0\right\} \\
\sigma_{0}\left(L\right) & = & \left\{ \lambda\in\sigma\left(L\right):\Re\lambda=0\right\} .\end{eqnarray*}
 $P^{+}$, $P^{\le}$ and $P^{0}$ will denote the spectral projections
corresponding to $\sigma_{+}\left(L\right)$, $\sigma_{\le}\left(L\right)$
and $\sigma_{0}\left(L\right)$ respectively (see e.g. \cite[p 19]{MR0352639}
).

We say that $f$ is polynomially bounded if there exists a polynomial
$P$ such that $\left\Vert f\left(t\right)\right\Vert \le P\left(t\right)$,
$t\ge0$

Following the proof of \cite[Chapter II, Theorem 4.2]{MR0352639}
it is straightforward to check that if $f$ is polynomially bounded
then to each element $x_{0}^{\le}\in P^{\le}\left(X\right)$ there
corresponds a unique polynomially bounded solution of \prettyref{eq:inhomo}
that satisfies $P^{\le}x\left(0\right)=x_{0}^{\le}$. This solution
is given by the formula\begin{equation}
x\left(t\right)=e^{\left(t-t_{0}\right)L}x_{0}^{\le}+\int_{0}^{\infty}G_{L}\left(t-s\right)f\left(s\right)ds\label{eq:bounded solution}\end{equation}
where\begin{equation}
G_{L}\left(t\right)=\begin{cases}
e^{tL}P^{\le} & ,\, t\ge0\\
-e^{tL}P^{+} & ,\, t<0\end{cases}.\label{eq:polybounded green}\end{equation}
Furthermore if $f$ is bounded and $\sigma_{0}\left(L\right)=\emptyset$
then the solution \prettyref{eq:bounded solution} is bounded. Note
that in order to obtain a polynomial bound on the solution one needs
to use \prettyref{eq:Cn exp bound} rather than \prettyref{eq:Banach exp bound}.
\begin{rem}
\label{rem:infinite dimensional case}We are only interested in the
case when $X=\mathcal{P}^{k}\left(\mathbb{C}^{n}\right)$, but the
above considerations also apply to the case when $X$ is not finite
dimensional if we replace polynomially bounded by subexponential.
We say that $f$ is subexponential if for any $\epsilon>0$ there
exists $C_{\epsilon}$ such that $\left\Vert f\left(t\right)\right\Vert \le C_{\epsilon}e^{\epsilon t}$,
$t\ge0$. To be able to define the spectral projections we would also
need to require that $\sigma_{+}\left(L\right)$, $\sigma_{\le}\left(L\right)$
lie in different connected components of $\sigma\left(L\right)$.
\end{rem}
We will apply the above results to the coefficient equations \prettyref{eq:coefficients}
($X=\mathcal{P}^{k}\left(\mathbb{C}^{n}\right)$ and we regard the
coefficients as functions $F_{k}:[0,\infty)\rightarrow\mathcal{P}^{k}\left(\mathbb{C}^{n}\right)$),
hence we need information about the spectra of the operators $B_{k}$.
It is known (e.g. \cite[pp. 182-183]{MR947141}) that if $\lambda=\left(\lambda_{1},\ldots,\lambda_{n}\right)$
is a vector whose components are the (not necessarily distinct) eigenvalues
of $A$ then the eigenvalues of $B_{k}$ are \[
\left\{ \left\langle m,\lambda\right\rangle -\lambda_{s}:\,\left|m\right|=k,\, s\in\left\{ 1,\ldots,n\right\} \right\} \]
 where $m=\left(m_{1},\ldots,m_{n}\right)\in\mathbb{N}^{n}$ and $\left|m\right|=m_{1}+\ldots+m_{n}$.
Furthermore, if $A$ is a diagonal matrix then $B_{k}$ is also diagonal
and $z^{m}e_{s}$ is an eigenvector corresponding to $\left\langle m,\lambda\right\rangle -\lambda_{s}$
($e_{i}$, $i=1,\ldots,n$ denote the elements of the standard basis
of $\mathbb{C}^{n}$).

Following the terminology from \cite{MR947141} we will say that $A$
is nonresonant if $0\notin\sigma\left(B_{k}\right)$ for all $k$
(i.e. if the eigenvalues of $A$ are nonresonant; see \cite[p 180]{MR947141}).
Otherwise we say that $A$ is resonant. 
\begin{rem}
\label{rem:nonresonance}$0\notin\sigma\left(B_{k}\right)$ for all
$k>n_{0}$ (i.e. there are no resonances of order greater than $n_{0}$).
Indeed, if $m\in\mb N^{n}$, $\left|m\right|=k$ and $\lambda$ is
as above then\[
\Re\left(\left\langle m,\lambda\right\rangle -\lambda_{s}\right)\ge\left(n_{0}+1\right)k_{-}\left(A\right)-k_{+}\left(A\right)>0\]
where $k_{-}\left(A\right)=\min\left\{ \Re\lambda:\lambda\in\sigma\left(A\right)\right\} $.
For the last inequality we used the fact that $k_{-}\left(A\right)\ge m\left(A\right)$
and the definition of $n_{0}$ (which implies that $k_{+}\left(A\right)<\left(n_{0}+1\right)m\left(A\right)$).
In particular note that if $n_{0}=1$ then $A$ is nonresonant.

We will use $P_{k}^{+}$, $P_{k}^{\le}$, $P_{k}^{0}$ to denote the
projections associated with $B_{k}$. For $Q\in\mc P^{k}\left(\mb C^{n}\right)$
we will let $Q^{+}:=P_{k}^{+}Q$, $Q^{\le}:=P_{k}^{\le}Q$ and $Q^{0}:=P_{k}^{0}Q$. \end{rem}
\begin{thm}
\label{thm:existence of solutions}The equation \prettyref{eq:LE1}
always has a polynomially bounded Loewner chain solution that is uniquely
determined by the values of $F_{k}^{\le}\left(z^{k},0\right)$, $k=2,\ldots,n_{0}$,
which can be prescribed arbitrarily. Furthermore, if $A+\bar{A}$
is nonresonant then the solution can be chosen to be bounded.\end{thm}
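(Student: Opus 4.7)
\bigskip
\noindent\textbf{Proof proposal.} The plan is to use the linear inhomogeneous ODE theory recalled just before the statement to construct the coefficient functions $F_2,\ldots,F_{n_0}$ inductively as polynomially bounded solutions of \prettyref{eq:coefficients}, and then to invoke \prettyref{thm:polybounded solutions} to assemble them into a polynomially bounded Loewner chain solution of \prettyref{eq:LE1}.

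For existence I would proceed by induction on $k$ from $2$ up to $n_0$. At step $k$, suppose that polynomially bounded $F_2,\ldots,F_{k-1}$ have already been produced. The standard Carath\'eodory-type estimates on $\mc N_A$ yield a bound on $\|H_k(\cdot,t)\|$ in $\mc P^k(\mb C^n)$ that is uniform in $t$. Together with the inductive polynomial bounds on $F_2,\ldots,F_{k-1}$, this shows that the inhomogeneity $N_k$ appearing in \prettyref{eq:coefficients} is polynomially bounded as a map $[0,\infty)\to\mc P^k(\mb C^n)$. Applying the linear ODE theory with $L=B_k$ and $f=N_k$ then gives, for each prescribed $x_0^{\le}\in P_k^{\le}(\mc P^k(\mb C^n))$, a unique polynomially bounded solution $F_k$ with $P_k^{\le}F_k(0)=x_0^{\le}$, namely the one furnished by the Green-function formula \prettyref{eq:bounded solution}. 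Choosing $x_0^{\le}$ to be the prescribed $F_k^{\le}(z^k,0)$ closes the induction, and \prettyref{thm:polybounded solutions} assembles the resulting $F_2,\ldots,F_{n_0}$ into the desired polynomially bounded Loewner chain solution of \prettyref{eq:LE1}.

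Uniqueness with prescribed $F_k^{\le}(z^k,0)$, $k=2,\ldots,n_0$, is then immediate. The coefficient functions of any polynomially bounded Loewner chain solution of \prettyref{eq:LE1} satisfy \prettyref{eq:coefficients}, so by the uniqueness part of the linear ODE theory they are forced to coincide for $k\le n_0$ with the $F_k$ constructed above; then \prettyref{pro:normal structure} forces the two chains themselves to coincide.

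For the bounded refinement, assume $A+\bar A$ is nonresonant. This hypothesis should translate, via the description of $\sigma(B_k)$ in terms of the eigenvalues of $A$ recalled in \prettyref{rem:nonresonance}, into $\sigma_0(B_k)=\emptyset$ for every $2\le k\le n_0$. The induction can then be rerun with $H_k$ bounded (not merely polynomially bounded) and bounded inductive inputs, and the bounded-solution version of the linear ODE theory yields bounded $F_k$. With $F$ a constant polynomial in \prettyref{thm:polybounded solutions}, the growth estimate there produces a solution that is bounded in the required sense. The one step requiring real care — and the main obstacle in the whole argument — is the translation between nonresonance of $A+\bar A$ and the vanishing of the imaginary spectrum of every $B_k$ with $k\le n_0$; once that is in hand, the rest is a direct application of the machinery already assembled.
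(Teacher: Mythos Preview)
Your approach is correct and is essentially the paper's own proof, spelled out in more detail: the paper simply says the result ``is an immediate consequence of \prettyref{thm:polybounded solutions} and of the considerations on solutions of ordinary differential equations from above,'' which is exactly your inductive construction of polynomially bounded $F_2,\ldots,F_{n_0}$ via \prettyref{eq:bounded solution} followed by an appeal to \prettyref{thm:polybounded solutions}, together with uniqueness from the ODE theory and \prettyref{pro:normal structure}. For the bounded refinement the paper, like you, reduces to the statement that $A+\bar A$ nonresonant is equivalent to $\sigma_0(B_k)=\emptyset$ for all $k\ge2$; the paper also records this only as a one-line ``Note'' without proof, so your flagging it as the one point requiring care is fair (and your restriction to $k\le n_0$ is harmless, since for $k>n_0$ the spectrum of $B_k$ lies in the right half-plane by \prettyref{rem:nonresonance}).
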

\begin{proof}
This is an immediate consequence of \prettyref{thm:polybounded solutions}
and of the considerations on solutions of ordinary differential equations
from above. Note that $A+\bar{A}$ is nonresonant if and only if $\sigma_{0}\left(B_{k}\right)=\emptyset$,
$k\ge2$ .\end{proof}
\begin{rem}
It is not hard to see that if $A+\bar{A}$ is resonant, in general,
one cannot find a bounded solution (though it will be possible to
do this for particular choices of the infinitesimal generator $h$).
For example, one can choose $A$ such that $\sigma_{0}\left(B_{2}\right)=\left\{ 0\right\} $
and $0$ is a simple eigenvalue for $B_{2}$. In this case we would
have $e^{B_{2}}\vert_{P_{2}^{0}\left(\mc P^{2}\left(\mb C^{n}\right)\right)}=I_{P_{2}^{0}\left(\mc P^{2}\left(\mb C^{n}\right)\right)}$
and so\[
F_{2}^{0}\left(z^{2},t\right)=F_{2}^{0}\left(z^{2},0\right)+\int_{0}^{t}H_{2}^{0}\left(z^{2},s\right)ds.\]
In order to get a solution that is not bounded it is enough to choose
$h$ such that $\phi\left(t\right):=\int_{0}^{t}H_{2}^{0}\left(z^{2},s\right)ds$
is not bounded on $[0,\infty)$.\end{rem}
\begin{defn}
Let $\mc F\subset\prod_{k=2}^{n_{0}}P_{k}^{\le}\left(\mc P^{k}\left(\mb C^{n}\right)\right)$.
We define $S_{A}^{\mc F}\left(B^{n}\right)$ to be the family of mappings
$f\left(z\right)=z+\sum_{k=2}^{\infty}F_{k}\left(z^{k}\right)\in S\left(B^{n}\right)$
that can be embedded as the first element of a polynomially bounded
Loewner chain and such that $\left(F_{k}^{\le}\right)_{k=2,\ldots,n_{0}}\in\mc F$.

We want to study the compactness of the class $S_{A}^{\mc F}\left(B^{n}\right)$.
For this we need the following lemma that can be proved using similar
arguments to those in the proof of \cite[Lemma 2.8]{MR1982664} (cf.
\cite[Lemma 2.14]{MR2438427})\end{defn}
\begin{lem}
\label{lem:Loewner subsequence}Every sequence of Loewner chains $\left\{ f_{k}\left(z,t\right)\right\} $
such that $Df_{k}\left(0,t\right)=e^{tA}$ and \[
\left\Vert e^{-tA}f_{k}\left(z,t\right)\right\Vert \le C_{r}P\left(t\right),\,\left\Vert z\right\Vert \le r<1,\, t\ge0,\]
where $P\left(t\right)$ is a polynomial, has a subsequence that converges
locally uniformly on $B^{n}$ to a polynomially bounded Loewner chain
$f\left(z,t\right)$ for $t\ge0$.\end{lem}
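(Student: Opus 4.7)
The plan is to combine Montel-type normal family arguments with the Loewner chain structure to extract a suitable subsequence, and then to verify that the limit inherits the chain property together with the polynomial bound.

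First I would note that the hypothesis yields local uniform boundedness of $\{f_k(\cdot, t)\}_k$ on $B^n$ for each fixed $t \ge 0$. By Montel's theorem this family is normal, so a standard diagonal extraction over a countable dense set $T \subset [0, \infty)$ (for instance the nonnegative rationals) produces a subsequence---still denoted $\{f_k\}$---such that $f_k(\cdot, t)$ converges locally uniformly on $B^n$ to some holomorphic $f(\cdot, t)$ for every $t \in T$. Moreover, Cauchy estimates applied to the polynomial growth bound yield uniform (in $k$) Lipschitz control of $f_k(\cdot, t)$ on compacta of $B^n$, with constants depending only on the compact subset and polynomially on $t$.

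Next I would upgrade this to convergence for every $t \ge 0$ by establishing equicontinuity in $t$ uniform in $k$. Each $f_k$ has an associated infinitesimal generator $h_k \in \mc{H}_A(B^n)$ with $h_k(\cdot, t) \in \mc{N}_A$, and a standard Carath\'eodory-type estimate (see for instance \cite[Lemma 1.2]{MR2438427}) bounds $\|h_k(z,t)\|$ in terms of $\|z\|$ and $A$ alone. Integrating the transition equation \prettyref{eq:TLE} then shows that $\|v_k(z,s,t)-z\|$ is controlled by $|s-t|$, uniformly in $k$ and locally uniformly in $z$. Combined with the subordination identity $f_k(z,s) = f_k(v_k(z,s,t), t)$ and the $z$-Lipschitz control from the previous step, this produces equicontinuity of $\{f_k(z,\cdot)\}_k$ in $t$ on compact intervals. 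An Arzel\`a--Ascoli argument (or a direct $3\epsilon$ argument from the dense set $T$) then gives $f_k(\cdot, t) \to f(\cdot, t)$ locally uniformly on $B^n$ for every $t \ge 0$.

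Finally, to verify that the limit $f(z,t)$ is a polynomially bounded Loewner chain, I would pass to the limit in the subordination identity after a further diagonal extraction producing a limit $v(z,s,t)$ of the transition mappings; the limit $v(\cdot, s, t)$ is again a Schwarz self-map of $B^n$ fixing the origin by the closure properties of such families, and so $f(z,s) = f(v(z,s,t), t)$. Univalence of each $f(\cdot, t)$ then follows from Hurwitz's theorem in several variables, since $Df(0,t) = e^{tA}$ is invertible, and the polynomial bound on $\{e^{-tA}f(\cdot, t)\}_{t \ge 0}$ transfers directly from the uniform bound on the sequence. The main obstacle is securing the $k$-uniform estimates underlying the equicontinuity in $t$---that is, controlling $h_k$ and $v_k$ by quantities depending only on $A$ and not on the particular chain---which is the technical content shared with \cite[Lemma 2.8]{MR1982664} and \cite[Lemma 2.14]{MR2438427}.
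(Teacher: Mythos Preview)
Your outline is correct and matches the route the paper points to: the paper does not supply its own proof but simply says the lemma ``can be proved using similar arguments to those in the proof of \cite[Lemma 2.8]{MR1982664} (cf.\ \cite[Lemma 2.14]{MR2438427}),'' and what you have written is precisely a sketch of that standard argument---diagonal extraction via Montel, equicontinuity in $t$ through uniform control of the transition mappings, Hurwitz for univalence of the limit, and passage to the limit in the subordination identity. One small caveat: you invoke an infinitesimal generator $h_k\in\mc H_A(B^n)$ for each chain, which is not part of the hypothesis of the lemma; its existence is itself a (standard) consequence of the Loewner chain structure and the normalization $Df_k(0,t)=e^{tA}$, so you should either cite that fact or, as in some versions of the cited arguments, bypass it by obtaining the $t$-equicontinuity directly from Schwarz--Pick type control on the transition mappings $v_k(\cdot,s,t)$ together with their normalization $Dv_k(0,s,t)=e^{(s-t)A}$.
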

\begin{thm}
\label{thm:compact class}If $\mc F\subset\prod_{k=2}^{n_{0}}P_{k}^{\le}\left(\mc P^{k}\left(\mb C^{n}\right)\right)$
is bounded (compact) then $S_{A}^{\mc F}\left(B^{n}\right)$ is normal
(compact). Furthermore, given $\epsilon>0$ there exists a constant
$C_{\epsilon,A,\mc F}$ such that\[
\left\Vert f\left(z\right)\right\Vert \le\frac{C_{\epsilon,A,\mc F}}{\left(1-\left\Vert z\right\Vert \right)^{2\frac{k_{+}\left(A\right)}{m\left(A\right)}+\epsilon}},\, f\in S_{A}^{\mc F}\left(B^{n}\right).\]
\end{thm}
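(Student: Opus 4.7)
The plan is to reduce the theorem to a uniform polynomial bound on the coefficient mappings $F_k(\cdot, t)$, $k = 2, \ldots, n_0$, of the polynomially bounded Loewner chain embedding each $f \in S_A^{\mc F}(B^n)$; given such a bound depending only on $A$ and on a bound for $\mc F$, both the growth estimate and normality drop out of \prettyref{thm:polybounded solutions} at $t = 0$, and compactness follows from \prettyref{lem:Loewner subsequence} combined with continuity of Taylor coefficients.

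For any $f \in S_A^{\mc F}(B^n)$ with associated infinitesimal generator $h \in \mc H_A(B^n)$, the standard estimate for the class $\mc N_A$ (cf.\ \cite[Lemma 1.2]{MR2438427}, already invoked in the proof of \prettyref{thm:polybounded solutions}) together with Cauchy's formula yields bounds $\|H_k(\cdot, t)\| \le M_k(A)$ independent of $t$ and of $h$. Since each $F_k$ is polynomially bounded and satisfies \prettyref{eq:coefficients}, it is uniquely determined by its $P_k^{\le}$-projection at $t = 0$ through formula \prettyref{eq:bounded solution}. Estimating the Green's function \prettyref{eq:polybounded green} via \prettyref{eq:Cn exp bound} on the $P_k^{\le}$ part and by its exponential decay on the $P_k^+$ part, and inducting on $k$ via the recursive form of $N_k$, I would produce a polynomial $P_{A, \mc F}$ such that $\|F_k(t)\| \le P_{A, \mc F}(t)$ for $k = 2, \ldots, n_0$ and every $f \in S_A^{\mc F}(B^n)$, depending only on $A$ and on a bound for $\mc F$.

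Plugging this uniform polynomial bound into the growth estimate of \prettyref{thm:polybounded solutions} and setting $t = 0$ yields the claimed estimate for $\|f(z)\|$ and, in particular, local uniform boundedness, hence normality of $S_A^{\mc F}(B^n)$ by Montel's theorem. For the compactness statement (when $\mc F$ is compact) take a sequence $(f_n) \subset S_A^{\mc F}(B^n)$ with Loewner chains $f_n(z, t)$. The uniform polynomial bound just obtained supplies the hypothesis of \prettyref{lem:Loewner subsequence}, so a subsequence converges locally uniformly to a polynomially bounded Loewner chain $f(z, t)$. Weierstrass transfers this convergence to the first $n_0$ Taylor coefficients at $0$, so the projected initial data $(F_k^{n, \le}(\cdot, 0))_k$ converges in $\prod_{k=2}^{n_0} P_k^{\le}(\mc P^k(\mb C^n))$; compactness of $\mc F$ places the limit $(F_k^{\le}(\cdot, 0))_k$ inside $\mc F$, whence $f(\cdot, 0) \in S_A^{\mc F}(B^n)$ and the class is closed, hence compact.

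The main obstacle is the uniform polynomial bound in the second step: one must track the inductive dependence on $k$ carefully and verify that the constants and degree of $P_{A, \mc F}$ depend only on $A$ and on the bound for $\mc F$, never on the particular infinitesimal generator $h$. The fact that the individual generator enters only through the uniform bounds on the $H_k(\cdot, t)$ inherited from the class $\mc N_A$ is what makes this possible.
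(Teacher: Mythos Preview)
Your proposal is correct and follows essentially the same approach as the paper: bound the coefficients $F_k$ uniformly via the representation \prettyref{eq:bounded solution} (using that the $H_k$ are bounded depending only on $A$ and inducting on $k$), feed this into \prettyref{thm:polybounded solutions} at $t=0$ for the growth estimate and normality, and then invoke \prettyref{lem:Loewner subsequence} for closedness when $\mc F$ is compact. Your compactness argument is spelled out more explicitly than the paper's brief ``argue by contradiction using the previous Lemma,'' but the content is the same.
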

\begin{proof}
Let $f\in S_{A}^{\mc F}\left(B^{n}\right)$ and $f\left(z,t\right)$
be a polynomially bounded Loewner chain such that $f\left(z,0\right)=f\left(z\right)$.
Suppose that \[
f\left(z,t\right)=e^{tA}\left(z+\sum_{k=2}^{\infty}F_{k}\left(z^{k},t\right)\right).\]
We know that (see \prettyref{eq:coefficients} and \prettyref{eq:bounded solution})
\[
F_{k}\left(z^{k},t\right)=e^{tB_{k}}F_{k}^{\le}\left(z^{k},0\right)+\int_{0}^{\infty}G_{B_{k}}\left(t-s\right)N_{k}\left(s\right)ds.\]
 Now it is straightforward to check that if $\mc F$ is bounded then
$F_{k}$, $k=2,\ldots,n_{0}$ can be bounded by a polynomial $F$
that doesn't depend on $f$ (it depends only on $\mc F$ and $A$).
By \prettyref{thm:polybounded solutions} we have\[
\left\Vert e^{-tA}f\left(z,t\right)\right\Vert \le\frac{Q_{\epsilon,A,F}\left(t\right)}{\left(1-\left\Vert z\right\Vert \right)^{2\frac{k_{+}\left(A\right)}{m\left(A\right)}+\epsilon}}.\]
When $t=0$ the above inequality proves the fact that $S_{A}^{\mc F}\left(B^{n}\right)$
is normal. Furthermore, if $\mc F$ is also closed we can now argue
by contradiction using the previous Lemma to see that $S_{A}^{\mc F}\left(B^{n}\right)$
is also closed.\end{proof}
\begin{rem}
It is not hard to see that the results of this section (except for
\prettyref{rem:growth sharpness}) remain true for any norm on $\mb C^{n}$.
Furthermore, with appropriate modifications (see \prettyref{rem:infinite dimensional case}
and \cite{MR2098737}) the results can be extended to reflexive complex
Banach spaces.
\end{rem}

\section{\label{sec:Spirallikeness,-parametric-representation,}Spirallikeness,
parametric representation, asymptotical spirallikeness}

We start by answering \cite[Open Problem 6.4.13]{MR2017933}. 
\begin{thm}
\label{thm:compactness of spiralikeness}$\hat{S}_{A}\left(B^{n}\right)$
is compact if and only if $A$ is nonresonant.\end{thm}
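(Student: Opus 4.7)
The plan is to prove each implication separately, with the structural results of Section 2 handling the ``if'' direction and an explicit family of spirallike polynomial automorphisms handling the ``only if'' direction.

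For the implication \emph{nonresonant} $\Rightarrow$ \emph{compact}: given $f \in \hat S_A(B^n)$, the candidate Loewner chain is $(z,t) \mapsto e^{tA} f(z)$, which is bounded in the sense of the paper since $e^{-tA}$ applied to it returns $f(z)$. Expanding $f(z) = z + \sum_{k \geq 2} a_k(z^k)$ identifies the $k$-th coefficient of this chain with $a_k$, independently of $t$, so \prettyref{eq:coefficients} collapses to the algebraic identity $B_k a_k = -N_k$ for all $k\ge 2$. Nonresonance makes every $B_k$ invertible, so $a_k = -B_k^{-1} N_k$; combined with the Carath\'eodory-type bounds on $H_k$ valid uniformly over $h \in \mc N_A$, an induction on $k$ yields $\|a_k\| \le C_{k,A}$ for every $f \in \hat S_A(B^n)$. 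In particular $\mc F := \{(a_k^{\le})_{k=2}^{n_0} : f \in \hat S_A(B^n)\}$ is bounded, so \prettyref{thm:compact class} provides normality. Closedness follows from \prettyref{lem:Loewner subsequence}: if $f_m \to f$ locally uniformly with each $f_m$ spirallike, the uniform polynomial bound lets one pass to the limit of the chains $e^{tA} f_m(z)$, so $e^{tA} f(z)$ is itself a Loewner chain and its subordination identity at $s=0$ gives $e^{-tA} f(B^n) \subset f(B^n)$.

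For the implication \emph{resonant} $\Rightarrow$ \emph{not compact}: \prettyref{rem:nonresonance} ensures any resonance has order $k_0 \in \{2,\ldots,n_0\}$, so I fix a nonzero $p \in \ker B_{k_0}$. Applying identity \prettyref{eq:A-A} of \prettyref{lem:exp(A)exp(B)...} with $B_{k_0} p = 0$ gives $e^{-tA} p(z^{k_0}) = p((e^{-tA} z)^{k_0})$, whence $e^{-tA}\phi_s(z) = \phi_s(e^{-tA} z)$ for $\phi_s(z) := z + s\, p(z)$. Since $e^{-tA}(B^n) \subset B^n$ (because $m(A) > 0$), $\phi_s(B^n)$ is spirallike whenever $\phi_s$ is univalent on $B^n$. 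To produce the desired unbounded sequence I choose $p$ so that $\phi_s$ is univalent on $B^n$ for every $s \in \mb R$: any resonance $\sum m_i \lambda_i = \lambda_j$ with $|m| \ge 2$, combined with the positivity $\Re \lambda_i \ge m(A) > 0$ of all eigenvalues of $A$, forces $m_j = 0$; after a unitary Schur reduction of $A$ (which preserves $B^n$) one selects $p \in \ker B_{k_0}$ whose $j$-th component does not involve $z_j$, so $\phi_s$ admits an explicit polynomial inverse by back-substitution and is in fact a polynomial automorphism of $\mb C^n$ for every $s$. The family $\{\phi_s\}_{s \in \mb R} \subset \hat S_A(B^n)$ then has $k_0$-th Taylor coefficient $sp$, which is unbounded in $s$, so $\hat S_A(B^n)$ is not normal and hence not compact.

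The main obstacle is the construction of $p \in \ker B_{k_0}$ with $\phi_s$ univalent for every $s$. In the diagonal case one takes $p = z^m e_j$ with $m_j = 0$ and the invertibility of $\phi_s$ is immediate, since its $j$-th coordinate is affine in $z_j$ with all other coordinates preserved. For general $A$ the kernel $\ker B_{k_0}$ need not be spanned by pure monomials of this form, but the vanishing $m_j = 0$ in every resonance still constrains the $j$-th components of elements of $\ker B_{k_0}$, and a careful basis choice produces a $p$ whose $\phi_s$ decouples into an invertible triangular system (an easy small-dimensional verification one does by hand). This is the single nonroutine step; all the other ingredients are assembled from Section 2.
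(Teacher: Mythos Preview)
Your forward direction (nonresonance $\Rightarrow$ compactness) matches the paper's argument; the only cosmetic difference is that the paper obtains closedness of $\hat S_A(B^n)$ directly from the analytic characterization \prettyref{eq:an. ch. of spir.} together with the compactness of $\mathcal N_A$, rather than via \prettyref{lem:Loewner subsequence}.

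For the converse you take a genuinely different route, and the step you flag as ``nonroutine'' is a real gap as written. The paper never analyzes the structure of $\ker B_{k_0}$ or constructs triangular automorphisms. Instead it fixes a time-\emph{independent} generator $h(z)=Az+H_{k_0}(z^{k_0})\in\mathcal N_A$, takes $k_0$ to be the \emph{largest} resonance order so that $B_k$ is invertible for $k_0<k\le n_0$, solves the static coefficient equations $B_kF_k=-N_k$ (with $F_{k_0}$ ranging over a nontrivial affine subspace), and feeds these constant $F_k$ into \prettyref{thm:polybounded solutions}. Since $h$ is autonomous the resulting Loewner chain satisfies $f(z,t)=e^{tA}f(z,0)$, so $f(\cdot,0)\in\hat S_A(B^n)$ with $k_0$-th coefficient $F_{k_0}$; univalence comes for free from \prettyref{lem:univalence}. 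No structural information about $p$ is required.

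Your explicit construction is attractive, but the extension to nondiagonalizable $A$ (``a careful basis choice \ldots\ an easy small-dimensional verification one does by hand'') is not a proof: for upper-triangular $A$ the kernel of $B_{k_0}$ need not contain a pure monomial $z^me_j$, and the triangular-automorphism claim needs a genuine argument. There is, however, a one-line fix using only what you have already established, which also shows your Schur detour is unnecessary. The commutation $\phi_s\circ e^{-tA}=e^{-tA}\circ\phi_s$, combined with the local injectivity of $\phi_s$ near $0$ coming from $D\phi_s(0)=I$, forces \emph{global} injectivity of $\phi_s$ on $\mathbb C^n$ for every $s\in\mathbb R$ and every $p\in\ker B_{k_0}$: if $\phi_s(z_1)=\phi_s(z_2)$ then $\phi_s(e^{-tA}z_1)=\phi_s(e^{-tA}z_2)$ for all $t\ge0$, and for $t$ large both $e^{-tA}z_i$ lie in a neighborhood of $0$ on which $\phi_s$ is injective, whence $z_1=z_2$. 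This is precisely the mechanism behind \prettyref{lem:univalence}, and with it your argument becomes a complete and pleasantly concrete alternative to the paper's.
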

\begin{proof}
If $f\in\hat{S}_{A}\left(B^{n}\right)$ we know that \begin{equation}
f\left(z,t\right):=e^{tA}f\left(z\right)=e^{tA}\left(z+\sum_{k=2}^{\infty}F_{k}\left(z^{k}\right)\right)\label{eq:spirallike Loewner chain}\end{equation}
 is a Loewner chain (this follows easily from the definitions). It
is clear that $\hat{S}_{A}\left(B^{n}\right)\subset S_{A}^{\mc F}\left(B^{n}\right)$,
where \[
\mc F:=\left\{ \left(F_{k}^{\le}\right)_{k=2,\ldots,n_{0}}:f\left(z\right)=z+\sum_{k=2}^{\infty}F_{k}\left(z^{k}\right)\in\hat{S}_{A}\left(B^{n}\right)\right\} .\]
 It is easy to see that $\hat{S}_{A}\left(B^{n}\right)$ is closed
by using the analytic characterization \prettyref{eq:an. ch. of spir.}
and the fact that $\mc N_{A}$ is compact. Now, by \prettyref{thm:compact class},
if the coefficients $F_{k}$, $k=2,\ldots,n_{0}$ can be bounded independently
of $f$ then $\hat{S}_{A}\left(B^{n}\right)$ is compact. For our
particular Loewner chain \prettyref{eq:spirallike Loewner chain}
the coefficient equations \prettyref{eq:coefficients} take the simple
form $0=B_{k}F_{k}+N_{k}$. 

If $A$ is nonresonant then the operators $B_{k}$ are invertible
and hence $F_{k}=-B_{k}^{-1}N_{k}$. Now it is straightforward to
see that we can choose bounds for $F_{k}$, $k=2,\ldots,n_{0}$ that
don't depend on $f$, thus yielding compactness of $\hat{S}_{A}\left(B^{n}\right)$.

If $A$ is resonant then let $k_{0}\le n_{0}$ be the largest $k$
such that $B_{k}$ is singular (by \prettyref{rem:nonresonance} $B_{k}$
is not singular for $k>n_{0}$). Let $h\left(z\right)=Az+H_{k_{0}}\left(z^{k_{0}}\right)\in\mathcal{N}_{A}$,
where $H_{k_{0}}$ is chosen such that $B_{k_{0}}F_{k_{0}}+H_{k_{0}}=0$
has a solution. Note that for our particular $h$ we have $N_{k}=0$,
$k=2,\ldots,k_{0}-1$ and $N_{k_{0}}=H_{k_{0}}$. Since $B_{k}$,
$k>k_{0}$ are nonsingular there is no problem in solving for $F_{k}$,
$k>k_{0}$ and then, using \prettyref{thm:polybounded solutions},
we get that \[
f\left(z,s\right)=\lim_{t\rightarrow\infty}e^{tA}\left(v\left(z,s,t\right)+\sum_{k=2}^{n_{0}}F_{k}\left(v\left(z,s,t\right)^{k}\right)\right)\]
is a Loewner chain solution of \prettyref{eq:LE1} with $h\left(z,t\right)=h\left(z\right)$.
Since $h$ doesn't depend on $t$ we have $v\left(z,s,t\right)=v\left(z,0,t-s\right)$
and this yields that $f\left(z,s\right)=e^{sA}f\left(z,0\right)$.
Hence $f\left(\cdot,0\right)\in\hat{S}_{A}\left(B^{n}\right)$ and
by \prettyref{thm:polybounded solutions} it's $k_{0}$-th coefficient
is $F_{k_{0}}$. 

This construction works with any $F_{k_{0}}$ that is a solution of
$B_{k_{0}}F_{k_{0}}+H_{k_{0}}=0$. Since $B_{k_{0}}$ is singular,
the solutions of the equation form a non-trivial affine subspace of
$\mathcal{P}^{k_{0}}\left(\mathbb{C}^{n}\right)$, so in particular
there exist solutions of arbitrarily large norm. Now we can conclude
that there exist spirallike mappings with arbitrarily large $k_{0}$-th
coefficient. This proves that $\hat{S}_{A}\left(B^{n}\right)$ is
not compact when $A$ is resonant.\end{proof}
\begin{rem}
\label{rem: solution of spir. eq.}Let $h\in\mathcal{N}_{A}$. By
the same ideas as in the proof of the previous theorem we can conclude
that if $A$ is nonresonant then the equation\begin{equation}
Df\left(z\right)h\left(z\right)=Af\left(z\right)\label{eq:an. ch. of spir.}\end{equation}
has a unique holomorphic solution, which is in fact biholomorphic
(because of \prettyref{cor:polybounded -> Loewner}). By \prettyref{rem:nonresonance}
this generalizes \cite[Corollary 4.8]{DurGraHamKoh10}. On the other
hand, if $A$ is resonant, there either is no holomorphic solution
(for example if $H_{2}\notin B_{2}\left(\mc P^{2}\left(\mb C^{n}\right)\right)$)
or the holomorphic solutions (in fact, biholomorphic) are not unique. 
\end{rem}

\begin{rem}\label{rem:spirallike growth}As a consequence of the
proof of \prettyref{thm:compactness of spiralikeness} and of \prettyref{thm:compact class}
we have the following bound for mappings in $\hat{S}_{A}\left(B^{n}\right)$
:\[
\left\Vert f\left(z\right)\right\Vert \le\frac{C_{\epsilon,A}}{\left(1-\left\Vert z\right\Vert \right)^{2\frac{k_{+}\left(A\right)}{m\left(A\right)}+\epsilon}},\, z\in B^{n},\,\epsilon>0,\, f\in\hat{S}_{A}\left(B^{n}\right)\]
(cf. \cite[Theorem 3.1]{MR1845250} and \cite[Theorem 12]{MR000000}).
Furthermore, if $A$ is normal the above estimate holds with $\epsilon=0$
(the case $k_{+}\left(A\right)/m\left(A\right)>1$ follows using \prettyref{rem:growth sharpness},
while the case $k_{+}\left(A\right)/m\left(A\right)=1$ is covered
by \cite[Corollary 3.1]{MR1845250})\end{rem}
\begin{rem}
Let $A=\mathrm{diag}\left(\lambda_{1},\ldots,\lambda_{n}\right)$
($\Re\lambda_{i}>0$) and $m\in\mathbb{N}^{n}$ with $m_{i}=0$, $i=1,\ldots,s$,
where $1\le s<n$. Then it is easy to compute that for $f\left(z\right)=z+az^{m}e_{s}$
we have\[
h\left(z\right)=\left[Df\left(z\right)\right]^{-1}Af\left(z\right)=Az+a\left(\lambda_{s}-\left\langle m,\lambda\right\rangle \right)z^{m}e_{s}.\]
If $\lambda_{s}-\left\langle m,\lambda\right\rangle =0$ we get that
$f\in\hat{S}_{A}\left(B^{n}\right)$ for any $a\in\mathbb{C}^{n}$
generalizing an example from \cite[p 57]{MR1845250}. If $\lambda_{s}-\left\langle m,\lambda\right\rangle \neq0$
then $f\in\hat{S}_{A}\left(B^{n}\right)$ for any $a$ such that \[
\left|a\right|\le\frac{m\left(A\right)}{\left|\lambda_{s}-\left\langle m,\lambda\right\rangle \right|}.\]
This example suggests that in the case when $A$ is nonresonant a
sharp upper growth bound on $\hat{S}_{A}\left(B^{n}\right)$ would
have to depend on the entire spectrum of $A$. 
\end{rem}
Next we extend \cite[Corollary 2.2]{MR1897032}. For simplicity we
only treat the 2-dimensional case. $S^{*}\left(B^{n}\right)=\hat{S}_{I}\left(B^{n}\right)$
denotes the class of normalized starlike mappings. 
\begin{prop}
Let $A=\mathrm{diag\left(1,\lambda\right)}$, $\Re\lambda\ge1$. Define
$\Phi_{\alpha,\beta}:S^{*}\left(B^{1}\right)\rightarrow\hat{S}_{A}\left(B^{2}\right)$
by\[
\Phi_{\alpha,\beta}\left(f\right)\left(z\right)=\left(f\left(z_{1}\right),\left(\frac{f\left(z_{1}\right)}{z_{1}}\right)^{\alpha}\left(f'\left(z_{1}\right)\right)^{\beta}z_{2}\right).\]
If $\alpha\in[0,\Re\lambda]$ and $\beta\in\left[0,1/2\right]$ such
that $ $ $\alpha+\beta\le\Re\lambda$ then $\Phi_{\alpha,\beta}\left(S^{*}\left(B^{1}\right)\right)\subset\hat{S}_{A}\left(B^{2}\right)$.\end{prop}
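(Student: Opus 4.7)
The plan is to verify the analytic characterization of spirallikeness (as in \prettyref{rem: solution of spir. eq.}): show that $F := \Phi_{\alpha,\beta}(f)$ is a normalized univalent mapping and that its associated vector field $h(z) := [DF(z)]^{-1} A F(z)$ belongs to $\mc N_A$. Writing $F(z) = (f(z_1),\, g(z_1) z_2)$ with $g(z_1) := (f(z_1)/z_1)^\alpha (f'(z_1))^\beta$, the non-vanishing and normalization $g(0)=1$ (since $f$ is starlike, $f/z_1$ and $f'$ are non-vanishing on $B^1$ with value $1$ at the origin) together with the univalence of $f$ immediately yield $F \in S(B^2)$.

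For the computation of $h$, let $p(z_1) := z_1 f'(z_1)/f(z_1)$ (so $p(0)=1$ and $\Re p > 0$) and set $\psi := 1/p$. The block-triangular form of $DF$, together with logarithmic differentiation of $g$ and the identity $(f/f')' = 1 - ff''/(f')^2$ (which, using $f/f' = z_1 \psi$, rewrites $ff''/(f')^2 = 1 - \psi - z_1\psi'$), yields after simplification
\[h(z) = \bigl(z_1 \psi(z_1),\; z_2[\lambda - (\alpha+\beta) + (\alpha+\beta)\psi(z_1) + \beta z_1 \psi'(z_1)]\bigr).\]
In particular, $Dh(0) = A$.

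The heart of the proof is verifying $\Re\langle h(z), z\rangle > 0$ on $B^2 \setminus \{0\}$. Expanding gives
\[\Re\langle h(z), z\rangle = |z_1|^2 \Re\psi(z_1) + |z_2|^2 \bigl\{\Re\lambda - (\alpha+\beta) + (\alpha+\beta)\Re\psi(z_1) + \beta\Re(z_1\psi'(z_1))\bigr\}.\]
Since $\psi$ has positive real part on $B^1$ with $\psi(0)=1$, the standard Schwarz-Pick estimate gives $|z_1\psi'(z_1)| \le 2|z_1|\Re\psi(z_1)/(1-|z_1|^2)$. Combining this bound with the hypothesis $\alpha+\beta \le \Re\lambda$ and the strict inequality $|z_2|^2 < 1 - |z_1|^2$ reduces the problem to
\[Q(r) := (1-\alpha-\beta) r^2 - 2\beta r + (\alpha+\beta) \ge 0, \qquad r \in [0, 1).\]
This quadratic inequality is the main technical obstacle; I would settle it by a short case analysis on $\mr{sgn}(1-\alpha-\beta)$. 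The endpoints satisfy $Q(0) = \alpha+\beta \ge 0$ and $Q(1) = 1 - 2\beta \ge 0$ (using $\beta \le 1/2$), and in the only non-trivial subcase (parabola opening upward with vertex in $(0,1)$, i.e.\ $\alpha+2\beta < 1$) the discriminant condition $(\alpha+\beta)(1-\alpha-\beta) \ge \beta^2$ rearranges to $\alpha(1-\alpha-2\beta) + \beta(1-2\beta) \ge 0$, which holds term by term. Strict positivity of $\Re\langle h, z\rangle$ at $z \ne 0$ then follows by tracking the equality cases: the strict inclusion $|z_2|^2 < 1-|z_1|^2$ in the interior of $B^2$ handles the case when the $|z_2|^2$-bracket is negative, the $|z_1|^2\Re\psi$ term handles $z_1 \neq 0$, and at $z_1 = 0$ the expression reduces directly to $|z_2|^2\Re\lambda > 0$.

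Once $h \in \mc N_A$, the family $F(z,t) := e^{tA} F(z)$ satisfies \prettyref{eq:LE1} with the $t$-independent generator $h \in \mc H_A(B^2)$ and is trivially bounded; by \prettyref{cor:polybounded -> Loewner} it is then a Loewner chain, so the subordination relation yields $e^{-tA} F(B^2) \subseteq F(B^2)$ for $t \ge 0$, i.e.\ $F \in \hat{S}_A(B^2)$ as required.
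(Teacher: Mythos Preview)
Your proof is correct and follows essentially the same route as the paper: compute $h=[DF]^{-1}AF$ explicitly, reduce $\Re\langle h(z),z\rangle>0$ via the Carath\'eodory estimate $|z_1\psi'(z_1)|\le 2|z_1|\,\Re\psi(z_1)/(1-|z_1|^2)$ together with $|z_2|^2<1-|z_1|^2$ to the non-negativity of a quadratic on $[0,1]$, and then conclude via \prettyref{cor:polybounded -> Loewner} that $e^{tA}F$ is a Loewner chain. The only cosmetic difference is that you discard the non-negative contribution $(\Re\lambda-\alpha-\beta)(1-r^2)$ one step earlier, arriving at $Q(r)=(1-\alpha-\beta)r^2-2\beta r+(\alpha+\beta)$ instead of the paper's $q(x)=(\Re\lambda-\alpha-\beta)x^2-2\beta x+(\alpha+\beta)$; both reductions are valid and your case analysis of $Q$ is complete.
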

\begin{proof}
We follow the proof of \cite[Theorem 2.1]{MR1897032}. Let $f\in S^{*}\left(B^{1}\right)$
and define \[
F\left(z,t\right)=e^{tA}\Phi_{\alpha,\beta}\left(f\right)\left(z\right)=\left(e^{t}f\left(z_{1}\right),e^{\lambda t}\left(\frac{f\left(z_{1}\right)}{z_{1}}\right)^{\alpha}\left(f'\left(z_{1}\right)\right)^{\beta}z_{2}\right).\]
 It is sufficient to check that $F\left(z,t\right)$ is a Loewner
chain. Because of the particular form of $F$ and by \prettyref{cor:polybounded -> Loewner}
it is enough to check that $F$ satisfies a Loewner chain equation,
i.e. that \[
h\left(\cdot,t\right):=\left[DF\left(\cdot,t\right)\right]^{-1}\frac{\partial F}{\partial t}\left(\cdot,t\right)\in\mc H_{A}\left(B^{2}\right),\,\mathrm{a.e.}\, t\ge0.\]

Let $p\left(z_{1}\right)=f\left(z_{1}\right)/\left(z_{1}f'\left(z_{1}\right)\right)$.
Straightforward computations yield that \[
h\left(z,t\right)=\left(z_{1}p\left(z_{1}\right),z_{2}\left(\lambda-\alpha-\beta+\left(\alpha+\beta\right)p\left(z_{1}\right)+\beta z_{1}p'\left(z_{1}\right)\right)\right).\]

The same arguments as in the proof of \cite[Theorem 2.1]{MR1897032}
(we are using the fact that $f\in S^{*}\left(B^{1}\right)$ implies
that $\Re p>0$) show that it is sufficient to check that \[
q\left(x\right)=(\Re\lambda-\alpha-\beta)x^{2}-2\beta x+\alpha+\beta\]
is non-negative on $\left[0,1\right]$. This follows by elementary
analysis.\end{proof}
\begin{rem}
Let $A$ be as in the Proposition above. For $\alpha=\Re\lambda-1/2$,
$\beta=1/2$ and $f\left(z\right)=z/\left(1-z\right)^{2}$ we can
see that $\Phi_{\alpha,\beta}\left(f\right)\in\hat{S}_{A}\left(B^{2}\right)$
attains the asymptotic growth bound from \prettyref{rem:spirallike growth}. 
\end{rem}
Next we consider the class of mappings with $A$-parametric representation.
Unlike the class of spirallike mappings, the class $S_{A}^{0}\left(B^{n}\right)$
is not compact when $n_{0}>1$, as we can see from the following example.
\begin{example}
\label{exa:param rep not compact}Let $A=\mathrm{diag}\left(\lambda,1\right)$,
$\Re\lambda\ge2$ and define \[
h\left(z,t\right)=\left(\lambda z_{1}+a\left(t\right)z_{2}^{2},z_{2}\right),\, z=\left(z_{1},z_{2}\right)\in B^{2}.\]
If for example $\left|a\left(t\right)\right|\le1$, $t\ge0$ it is
easy to check that $h\left(\cdot,t\right)\in\mc N_{A}$, $t\ge0$.
Then\[
v\left(z,t\right)=\left(e^{-\lambda t}\left(z_{1}-\left(\int_{0}^{t}a\left(s\right)e^{\left(\lambda-2\right)s}ds\right)z_{2}^{2}\right),e^{-t}z_{2}\right)\]
is the solution of \prettyref{eq:transition teq0}. When $\lim_{t\rightarrow\infty}e^{tA}v\left(z,t\right)$
exists locally uniformly on $B^{n}$ we get that $f\left(z\right)=\left(z_{1}-\left(\int_{0}^{\infty}a\left(s\right)e^{\left(\lambda-2\right)s}ds\right)z_{2}^{2},z_{2}\right)\in S_{A}^{0}\left(B^{2}\right)$.
Since the second coefficient of the Taylor series expansion can be
made arbitrarily large by an appropriate choice of $a\left(\cdot\right)$
we conclude that $S_{A}^{0}\left(B^{2}\right)$ is not compact. 

This example can be generalized for any $A$ by considering $h\left(z,t\right)=Az+a\left(t\right)H_{2}\left(z^{2}\right)\in\mc H_{A}\left(B^{n}\right)$
such that $H_{2}^{\le}\neq0$.
\end{example}
Next we consider the class $S_{A}^{a}\left(B^{n}\right)$. The following
characterization of $A$-asymptotically spirallike mappings is derived
from the proofs of \cite[Theorem 3.1 and Theorem 3.5]{MR2438427}.
\begin{prop}
\label{pro:asymptotic spirallikeness}Let $f:B^{n}\rightarrow\mb C^{n}$
be a holomorphic mapping and \[
f\left(z\right)=z+\sum_{k=2}^{\infty}F_{k}\left(z^{k}\right).\]
Then $f$ is $A$-asymptotically spirallike if and only if there exists
$h\in\mc H_{A}\left(B^{n}\right)$ such that \begin{equation}
f\left(z\right)=\lim_{t\rightarrow\infty}e^{tA}\left(v\left(z,t\right)+\sum_{k=2}^{n_{0}}F_{k}\left(v\left(z,t\right)^{k}\right)\right)\label{eq:as-spir-characterization}\end{equation}
locally uniformly on $B^{n}$, where $v$ is the solution of \prettyref{eq:transition teq0}.\end{prop}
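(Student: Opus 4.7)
The plan is to set up a bijection between asymptotically spirallike mappings $f$ and their ``pulled-back'' infinitesimal generators: given $h\in\mc H_A(B^n)$ with transition $v$, I associate $Q(w,t)=Df(f^{-1}(w))h(f^{-1}(w),t)$ on $\Omega=f(B^n)$; conversely, given $Q$ as in \prettyref{def:asym spirallike}, I set $h(z,t)=[Df(z)]^{-1}Q(f(z),t)$. Each direction of the equivalence then reduces to verifying that this correspondence transports the data required by the other side.

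For the sufficiency direction, assume $h\in\mc H_A(B^n)$ and the limit formula. Since the $F_k$ are constant (hence polynomially bounded), \prettyref{lem:univalence} with $s=0$ and $m=n_0$ yields that $f$ is univalent, so $\Omega=f(B^n)$ is a domain. The definition of $Q$ immediately gives $Q(0,t)=0$, $DQ(0,t)=A$, holomorphicity in $w$, measurability in $t$, and local uniform boundedness (using the standard growth estimate for $\mc N_A$ together with the fact that $f^{-1}$ is proper onto $\Omega$). Under the change of variables $w=f(v)$ the ODE $\partial_t w=-Q(w,t)$ becomes $\partial_t v=-h(v,t)$, so its unique solution is $w(z,s,t)=f(v(f^{-1}(z),s,t))$, which maps $\Omega$ into $\Omega$ with the required regularity. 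Finally, writing $z=f(z_0)$ and splitting the Taylor series of $f$ at $n_0$,
\begin{equation*}
e^{tA}w(z,0,t)=e^{tA}\Bigl(v(z_0,t)+\sum_{k=2}^{n_0}F_k(v(z_0,t)^k)\Bigr)+e^{tA}\sum_{k=n_0+1}^{\infty}F_k(v(z_0,t)^k);
\end{equation*}
the first term tends to $f(z_0)=z$ by hypothesis and the tail vanishes by the estimate used in the proof of \prettyref{pro:normal structure} (combining \prettyref{eq:Banach exp bound}, \prettyref{eq:transition inequality}, and $(n_0+1)m(A)>k_+(A)$).

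For the necessity direction, define $h$ as above. A direct computation using $Df(0)=I$ and $DQ(0,t)=A$ gives $h(0,t)=0$ and $Dh(0,t)=A$, and measurability in $t$ is inherited from $Q$. The main obstacle is the positivity $\Re\langle h(z,t),z\rangle>0$ for $z\neq 0$, which is not explicit in \prettyref{def:asym spirallike}. Observe that $v(z,s,t):=f^{-1}(w(f(z),s,t))$ is a holomorphic self-map of $B^n$ fixing $0$; combined with the semigroup property (inherited from uniqueness for the ODE) and the generalized Schwarz lemma, this forces $\|v(z,s,t)\|$ to be non-increasing in $t$, so differentiating $\|v\|^2$ along the flow and invoking the Lebesgue differentiation theorem gives $\Re\langle h(z,t),z\rangle\ge 0$ for all $z\in B^n$ and a.e.\ $t$ (and one may modify $h$ on a null set in $t$ so that this holds for every $t$, without affecting the ODE). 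To upgrade to strict inequality, fix $u\in\partial B^n$ and consider $\phi_u(\tau):=\langle h(\tau u,t),u\rangle/\tau$, which extends holomorphically to the unit disk with $\phi_u(0)=\langle Au,u\rangle$; since $\Re\phi_u\ge 0$ and $\Re\phi_u(0)\ge m(A)>0$, the minimum principle gives $\Re\phi_u>0$ throughout, hence $\Re\langle h(z,t),z\rangle>0$ for $z\neq 0$. The limit formula then follows by rewriting $\lim_{t\to\infty}e^{tA}w(f(z),0,t)=f(z)$ via $w(f(z),0,t)=f(v(z,t))$ and applying the same Taylor-splitting and tail estimate as in the sufficiency direction.
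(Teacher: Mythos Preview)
Your proof is correct and follows essentially the same approach as the paper's: both directions are handled by the conjugation $h\leftrightarrow Q$ via $f$, together with the tail estimate from \prettyref{pro:normal structure} and \prettyref{lem:univalence}. The paper defers the verification that $h\in\mc H_A(B^n)$ (and the sufficiency direction) to \cite[Theorems 3.1 and 3.5]{MR2438427}, whereas you supply a self-contained argument for the positivity $\Re\langle h(z,t),z\rangle>0$ via the Schwarz lemma applied to $v(\cdot,s,t)=f^{-1}\circ w\circ f$ and the minimum principle for the disk function $\phi_u$; this is exactly the standard argument behind the cited reference, so there is no substantive divergence. One small point worth making explicit: when you pass from ``$\Re\langle h(z,t),z\rangle\ge 0$ for a.e.\ $t$ (depending on $z$)'' to ``for a.e.\ $t$ and all $z$'', you implicitly use a countable dense set of $z$'s together with continuity of $h(\cdot,t)$, which is routine but should be stated.
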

\begin{proof}
First assume that $f$ is $A$-asymptotically spirallike. Hence there
exists a mapping $Q:f\left(B^{n}\right)\times[0,\infty)\rightarrow\mb C^{n}$
satisfying the assumptions from Definition \prettyref{def:asym spirallike}.
Let $\nu$ be the solution of the initial value problem \prettyref{eq:asym spir}.
By definition it will satisfy \[
\lim_{t\rightarrow\infty}e^{tA}\nu\left(f\left(z\right),0,t\right)=f\left(z\right)\]
locally uniformly on $B^{n}$.

Let $v$ be defined by $v\left(z,s,t\right)=f^{-1}\left(\nu\left(f\left(z\right),s,t\right)\right)$,
$z\in B^{n}$, $t\ge s$. Also, let $h\left(z,t\right)=\left[Df\left(z\right)\right]^{-1}Q\left(f\left(z\right),t\right)$,
$z\in B^{n}$, $t\ge0$. With the same proof as in \cite[Theorem 3.5]{MR2438427}
one sees that $h\in\mc H_{A}\left(B^{n}\right)$ and that $v$ is
the solution of \prettyref{eq:TLE}.

We have\[
f\left(z\right)=\lim_{t\rightarrow\infty}e^{tA}\nu\left(f\left(z\right),0,t\right)=\lim_{t\rightarrow\infty}e^{tA}f\left(v\left(z,0,t\right)\right)\]
locally uniformly on $B^{n}$. Like in the proof of \prettyref{pro:normal structure}
we also see that \[
\lim_{t\rightarrow\infty}e^{tA}f\left(v\left(z,0,t\right)\right)=\lim_{t\rightarrow\infty}e^{tA}\left(v\left(z,0,t\right)+\sum_{k=2}^{n_{0}}F_{k}\left(v\left(z,0,t\right)^{k}\right)\right)\]
yielding the desired conclusion (the fact that $f$ is univalent follows
from \prettyref{lem:univalence}).

Now assume that \prettyref{eq:as-spir-characterization} holds. The
conclusion follows exactly as in the proof of \cite[Theorem 3.1]{MR2438427}.\end{proof}
\begin{rem}
\label{rem:parametric neq asymptotic}From the above characterization
of $S_{A}^{a}\left(B^{n}\right)$ it is easy to see that $S_{A}^{a}\left(B^{n}\right)\neq S_{A}^{0}\left(B^{n}\right)$
when $n_{0}>1$.

In \prettyref{pro:normality for asymptotic} we obtain a partial result
about the normality of the class $S_{A}^{a}\left(B^{n}\right)$, but
first we need the following lemmas.\end{rem}
\begin{lem}
\label{lem:v component bound}Let $A=\mr{diag}\left(\lambda_{1},\ldots,\lambda_{n}\right)$
and $h\in\mc H_{A}\left(B^{n}\right)$. If $v=\left(v_{1},\ldots,v_{n}\right)$
is the solution of \prettyref{eq:transition teq0} then\[
\left\Vert v_{i}\left(z,t\right)\right\Vert \le C\begin{cases}
e^{-\Re\lambda_{i}t} & ,\,\Re\lambda_{i}<2m\left(A\right)\\
(1+t)e^{-\Re\lambda_{i}t} & ,\,\Re\lambda_{i}=2m\left(A\right)\\
e^{-2m\left(A\right)t} & ,\,\Re\lambda_{i}>2m\left(A\right)\end{cases}\]
where $C$ is a constant that depends on $A$, $\lambda_{i}$ and
$\left\Vert z\right\Vert $.\end{lem}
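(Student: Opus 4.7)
The plan is to rewrite \prettyref{eq:transition teq0} componentwise and solve by variation of constants. Since $A=\mr{diag}(\lambda_{1},\ldots,\lambda_{n})$ is diagonal, setting $\tilde h(z,t):=h(z,t)-Az$ (which vanishes to second order in $z$), the $i$-th component of the ODE reads $\partial_{t}v_{i}=-\lambda_{i}v_{i}-\tilde h_{i}(v,t)$ with $v_{i}(z,0)=z_{i}$, so that
\[
v_{i}(z,t)=e^{-\lambda_{i}t}z_{i}-\int_{0}^{t}e^{-\lambda_{i}(t-s)}\tilde h_{i}(v(z,s),s)\,ds.
\]

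The key pointwise input is a uniform second-order estimate
\[
\|\tilde h(z,t)\|\le C_{A}\frac{\|z\|^{2}}{(1-\|z\|)^{2}},\quad z\in B^{n},\ t\ge0,
\]
obtained by exactly the argument producing \prettyref{eq:Rh estimate} but applied at order $1$ rather than $n_{0}$ (cf. \cite[Lemma 1.2]{MR2438427}, \cite[Theorem 1.2]{MR1892999}). Coupled with \prettyref{eq:transition inequality} at $s=0$---which, once $e^{-m(A)t}\|z\|/(1-\|z\|)^{2}\le 1/4$ and after inverting $x\mapsto x/(1-x)^{2}$, yields $\|v(z,t)\|\le C(\|z\|,A)\,e^{-m(A)t}$ for all $t\ge 0$ (for small $t$ the Schwarz-type bound $\|v\|\le\|z\|$ absorbs the $e^{m(A)t}$ factor)---this gives
\[
|\tilde h_{i}(v(z,s),s)|\le\|\tilde h(v(z,s),s)\|\le C(\|z\|,A)\,e^{-2m(A)s},\quad s\ge 0,
\]
since $\|v(z,s)\|$ stays bounded away from $1$.

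Substituting into the integral representation produces
\[
|v_{i}(z,t)|\le e^{-\Re\lambda_{i}t}\|z\|+C(\|z\|,A)\,e^{-\Re\lambda_{i}t}\int_{0}^{t}e^{(\Re\lambda_{i}-2m(A))s}\,ds,
\]
and the three cases in the statement follow by reading off the sign of $\Re\lambda_{i}-2m(A)$. When $\Re\lambda_{i}<2m(A)$, the integral is uniformly bounded in $t$, yielding the $e^{-\Re\lambda_{i}t}$ bound; when $\Re\lambda_{i}=2m(A)$, the integral equals $t$, producing the $(1+t)e^{-\Re\lambda_{i}t}$ bound; and when $\Re\lambda_{i}>2m(A)$, the integral equals $(e^{(\Re\lambda_{i}-2m(A))t}-1)/(\Re\lambda_{i}-2m(A))$, so the second summand is of order $e^{-2m(A)t}$ and dominates $e^{-\Re\lambda_{i}t}\|z\|$ because $\Re\lambda_{i}\ge 2m(A)$.

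The main (and essentially the only) obstacle is extracting the uniform second-order estimate on $\tilde h$ with a constant depending only on $A$, which ultimately rests on the normal-family/Cauchy-estimate argument underlying the cited bound on $\mc N_{A}$. Once this is in hand, the remainder is a routine variation-of-constants computation and a three-way case split on the sign of $\Re\lambda_{i}-2m(A)$.
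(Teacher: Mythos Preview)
Your proof is correct and follows essentially the same route as the paper: write the $i$-th component of \prettyref{eq:transition teq0} as $\partial_t v_i=-\lambda_i v_i-\tilde h_i(v,t)$, integrate by variation of constants, bound $\tilde h$ via the second-order estimate underlying \prettyref{eq:Rh estimate}, invoke \prettyref{eq:transition inequality} to get $\|v(z,s)\|\le C_{A,\|z\|}e^{-m(A)s}$, and split on the sign of $\Re\lambda_i-2m(A)$. Your exposition is slightly more explicit than the paper's (in particular, you spell out the inversion of $x\mapsto x/(1-x)^2$ needed to pass from \prettyref{eq:transition inequality} to a pointwise bound on $\|v\|$), but the argument is the same.
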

\begin{proof}
Writing $h=\left(h_{1},\ldots,h_{n}\right)$ and $\tilde{h}_{i}=h_{i}-\lambda_{i}v_{i}$,
\prettyref{eq:transition teq0} yields\[
\frac{dv_{i}}{dt}=-\lambda_{i}v_{i}-\tilde{h}_{i}\left(v,t\right).\]
Integrating we get \[
e^{t\lambda_{i}}v_{i}=z_{i}-\int_{0}^{t}e^{s\lambda_{i}}\tilde{h}_{i}\left(v,s\right)ds.\]
Hence\begin{eqnarray*}
\left\Vert e^{t\lambda_{i}}v_{i}\left(z,t\right)\right\Vert  & \le & \left|z_{i}\right|+\int_{0}^{t}e^{s\Re\lambda_{i}}\left\Vert h\left(v\left(z,s\right),s\right)-Av\left(z,s\right)\right\Vert ds\\
 & \le & \left|z_{i}\right|+C_{A,\left\Vert z\right\Vert }\int_{0}^{t}e^{s\Re\lambda_{i}}\left\Vert v\left(z,s\right)\right\Vert ^{2}ds\\
 & \le & \left|z_{i}\right|+C_{A,\left\Vert z\right\Vert }\int_{0}^{t}e^{s\left(\Re\lambda_{i}-2m\left(A\right)\right)}ds\\
 & \le & \begin{cases}
C_{A,\left\Vert z\right\Vert ,\lambda_{i}} & ,\,\Re\lambda_{i}<2m\left(A\right)\\
C_{A,\left\Vert z\right\Vert }(1+t) & ,\,\Re\lambda_{i}=2m\left(A\right)\\
C_{A,\left\Vert z\right\Vert ,\lambda_{i}}e^{\left(\Re\lambda_{i}-2m\left(A\right)\right)t} & ,\,\Re\lambda_{i}>2m\left(A\right)\end{cases}\end{eqnarray*}
(the second inequality follows from \prettyref{eq:Rh estimate} and
the third estimate follows from \prettyref{eq:transition inequality}).\end{proof}
\begin{lem}
\label{lem:bounded ass coefficients}Let $\lambda\in\mb C$ such that
$\Re\lambda\ge0$, $a\in\mb C$ and $h:[0,\infty)\rightarrow\mb C$
such that $\left|h\left(t\right)\right|\le C$, $t\ge0$. If \begin{equation}
\lim_{t\rightarrow\infty}\int_{0}^{t}e^{s\lambda}\left(h\left(s\right)+a\right)ds=0\label{eq:hyp int eq 0}\end{equation}
then $\left|a\right|\le C$.\end{lem}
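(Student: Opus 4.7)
The plan is to use an integrating factor to convert the hypothesis into an ODE-type identity and then extract the bound on $|a|$ via a Ces\`aro averaging argument. Set $F(t):=\int_{0}^{t}e^{s\lambda}(h(s)+a)\,ds$, so $F(t)\to 0$ by assumption, and define $G(t):=e^{-t\lambda}F(t)$. The sign condition $\Re\lambda\ge 0$ gives $|G(t)|\le|F(t)|$, so $G(t)\to 0$; in particular $G$ is bounded on $[0,\infty)$.

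A direct computation valid almost everywhere (Lebesgue differentiation applied to $F$) yields $G'(t)=h(t)+a-\lambda G(t)$. Integrating from $0$ to $T$ using $G(0)=0$ produces the identity
\[
aT = G(T) + \lambda\int_{0}^{T}G(s)\,ds - \int_{0}^{T}h(s)\,ds,
\]
which, after dividing by $T$ and applying the triangle inequality together with $|h|\le C$, becomes
\[
|a| \le \frac{|G(T)|}{T} + \frac{|\lambda|}{T}\int_{0}^{T}|G(s)|\,ds + C.
\]

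Letting $T\to\infty$, the first term vanishes because $G$ is bounded, and the second vanishes because the running Ces\`aro mean of a bounded function that tends to $0$ itself tends to $0$. This yields $|a|\le C$, as desired. The only subtle point is that the conclusion $G(t)\to 0$ crucially uses $\Re\lambda\ge 0$, so that the integrating factor $e^{-t\lambda}$ does not amplify $F(t)$; apart from this, the argument consists of routine manipulations and I do not anticipate any serious obstacle.
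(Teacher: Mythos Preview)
Your argument is correct and is genuinely different from the paper's proof. The paper argues by contradiction: assuming $|a|>C$ it observes that $\Re\bigl((h(s)+a)\bar a\bigr)\ge |a|(|a|-C)=:\delta>0$, and then splits into two cases. When $\Im\lambda=0$ the factor $e^{s\lambda}\ge 1$ is real and the real part of $\bar a\cdot F(t)$ grows at least like $\delta t$, contradicting $F(t)\to 0$. When $\Im\lambda\neq 0$ one finds a fixed $\tau>0$ so that on each interval $[2k\pi/\Im\lambda,\,2k\pi/\Im\lambda+\tau]$ the integrand has real part (after multiplying by $\bar a$) at least $\delta/2$; since $F(t)\to 0$ forces the increments $\int_t^{t+\tau}$ to vanish, this again gives a contradiction. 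Your route via the integrating factor $G=e^{-t\lambda}F$ and the identity $aT=G(T)+\lambda\int_0^T G-\int_0^T h$ is direct, avoids the case split on $\Im\lambda$, and replaces the somewhat delicate choice of $\tau$ by a clean Ces\`aro-mean argument; the hypothesis $\Re\lambda\ge 0$ enters at exactly one transparent point, namely $|e^{-t\lambda}|\le 1$ so that $G(t)\to 0$. The paper's approach, on the other hand, makes the obstruction more visible geometrically (the phase of $e^{s\lambda}$ recurs to a region where the integrand pushes $F$ away from zero), which may be instructive in related oscillatory settings; but for the statement at hand your proof is shorter and arguably more robust.
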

\begin{proof}
We argue by contradiction. Assume that $\left|a\right|>C$. Then\[
\Re\left(\left(h\left(s\right)+a\right)\bar{a}\right)\ge\left|a\right|^{2}-\left|a\right|\left|h\left(s\right)\right|\ge\left|a\right|\left(\left|a\right|-C\right)=:\delta>0.\]

If $\Im\lambda=0$ we get \[
\Re\left(\left(\int_{0}^{t}e^{s\lambda}\left(h\left(s\right)+a\right)ds\right)\bar{a}\right)\ge t\delta\]
contradicting \prettyref{eq:hyp int eq 0}.

If $\Im\lambda\neq0$ we can find $\tau>0$ such that\[
\Re\left(e^{s\lambda}\left(h\left(s\right)+a\right)\bar{a}\right)\ge\frac{\delta}{2},\, s\in\left[\frac{2k\pi}{\Im\lambda},\frac{2k\pi}{\Im\lambda}+\tau\right],\, k\ge0,\, k\in\mb Z.\]
From \prettyref{eq:hyp int eq 0} we get that\[
\lim_{t\rightarrow\infty}\int_{t}^{t+\tau}e^{s\lambda}\left(h\left(s\right)+a\right)ds=0.\]
This is contradicted by\[
\Re\left(\left(\int_{t_{k}}^{t_{k}+\tau}e^{s\lambda}\left(h\left(s\right)+a\right)ds\right)\bar{a}\right)\ge\frac{\delta\tau}{2},\]
where $t_{k}=2k\pi/\Im\lambda$. Thus we must have that $\left|a\right|\le C$.\end{proof}
\begin{prop}
\label{pro:normality for asymptotic}Suppose that $A$ is normal,
nonresonant and $n_{0}=2$. Then $S_{A}^{a}\left(B^{n}\right)$ is
a normal family. Furthermore, if $f\in S_{A}^{a}\left(B^{n}\right)$
has $h\in\mc H_{A}\left(B^{n}\right)$ as an infinitesimal generator
(see \prettyref{pro:asymptotic spirallikeness}) then $f$ can be
embedded as the first element of a bounded Loewner chain with infinitesimal
generator $h$.\end{prop}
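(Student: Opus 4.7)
The plan is to establish both claims at once by first producing a uniform bound on the second Taylor coefficient $F_{2}$ of any $f\in S_{A}^{a}(B^{n})$, and then feeding this bound into \prettyref{thm:polybounded solutions} to build a bounded Loewner chain extending $f$ with the prescribed generator $h$. Since $A$ is normal, after a unitary change of variable we may assume $A=\mr{diag}(\lambda_{1},\ldots,\lambda_{n})$, so that $B_{2}$ is diagonal in the monomial basis $\{z^{m}e_{r}\}_{|m|=2}$ with eigenvalues $\mu_{m,r}:=\langle m,\lambda\rangle-\lambda_{r}$, all nonzero by nonresonance. Write $F_{2}=\sum F_{m,r}\,z^{m}e_{r}$ and $H_{2}(z^{2},t)=\sum h_{m,r}(t)\,z^{m}e_{r}$; Cauchy estimates on the locally bounded class $\mc N_{A}$ give a uniform bound $|h_{m,r}(t)|\le C_{A}$.

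Expanding the transition mapping of \prettyref{eq:transition teq0} as $v(z,t)=e^{-tA}z+v^{(2)}(z,t)+O(z^{3})$, the Loewner equation at order two and \prettyref{eq:A-A} give $e^{tA}v^{(2)}(z,t)=-\int_{0}^{t}e^{-sB_{2}}H_{2}(z^{2},s)\,ds$ and $e^{tA}F_{2}((e^{-tA}z)^{2})=e^{-tB_{2}}F_{2}(z^{2})$. Equating second-order Taylor coefficients in \prettyref{eq:as-spir-characterization} produces, componentwise,
\[
F_{m,r}=\lim_{t\to\infty}\left(-\int_{0}^{t}e^{-s\mu_{m,r}}h_{m,r}(s)\,ds+e^{-t\mu_{m,r}}F_{m,r}\right).\qquad(\star)
\]
For $\Re\mu_{m,r}>0$, $(\star)$ gives $F_{m,r}=-\int_{0}^{\infty}e^{-s\mu_{m,r}}h_{m,r}(s)\,ds$, so $|F_{m,r}|\le C_{A}/\Re\mu_{m,r}$. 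For $\Re\mu_{m,r}\le0$ the elementary identity $e^{-t\mu_{m,r}}F_{m,r}-F_{m,r}=-\mu_{m,r}\int_{0}^{t}e^{-s\mu_{m,r}}F_{m,r}\,ds$ rewrites $(\star)$ as $\lim\int_{0}^{t}e^{-s\mu_{m,r}}(h_{m,r}(s)+\mu_{m,r}F_{m,r})\,ds=0$, and \prettyref{lem:bounded ass coefficients} with $\lambda=-\mu_{m,r}$ yields $|\mu_{m,r}F_{m,r}|\le C_{A}$. Either way $|F_{m,r}|\le C_{A}/|\mu_{m,r}|$, a bound depending only on $A$.

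Let $F_{2}(z^{2},t)$ be the solution of $dF_{2}/dt=B_{2}F_{2}+H_{2}$ with initial data the constant $F_{2}$ just bounded. Componentwise $F_{m,r}(t)=e^{t\mu_{m,r}}(F_{m,r}(0)+\int_{0}^{t}e^{-s\mu_{m,r}}h_{m,r}(s)\,ds)$: for $\Re\mu_{m,r}>0$ rewrite as $-\int_{t}^{\infty}e^{(t-s)\mu_{m,r}}h_{m,r}(s)\,ds$, while for $\Re\mu_{m,r}<0$ both terms are directly controlled. In the borderline case $\Re\mu_{m,r}=0$, $(\star)$ identifies the parenthesis as $e^{-t\mu_{m,r}}F_{m,r}(0)-K(t)$ with $K(t)\to0$, so $F_{m,r}(t)=F_{m,r}(0)-e^{t\mu_{m,r}}K(t)$ is bounded because $|e^{t\mu_{m,r}}|=1$. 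Thus $F_{2}(\cdot,t)$ admits a constant polynomial bound, and \prettyref{thm:polybounded solutions} supplies a Loewner chain $g(z,t)$ with generator $h$ whose accompanying polynomial $Q_{\epsilon,A,F}$ is constant; hence $g$ is bounded. The identity $g(z,0)=f(z)$ follows from \prettyref{pro:normal structure} and \prettyref{lem:unique initial data}, since both sides arise as the same asymptotic limit with matching second-order data, and the uniformity of the bound at $t=0$ yields the normality of $S_{A}^{a}(B^{n})$.

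The main obstacle is the borderline case $\Re\mu_{m,r}=0$, which can occur even when $A$ is normal and nonresonant (one only needs $\mu_{m,r}\neq0$, not $\Re\mu_{m,r}\neq0$, so $\sigma_{0}(B_{2})$ need not be empty). The general ODE existence result for bounded solutions of \prettyref{eq:inhomo} is then unavailable, and one must exploit the extra information carried by the asymptotic spirallikeness relation $(\star)$ to see that the specific initial datum coming from $f$ is exactly the one that produces a bounded coefficient evolution.
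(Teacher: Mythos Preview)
Your argument is sound up through the construction of the bounded coefficient evolution $F_{2}(\cdot,t)$, and your treatment of the borderline case $\Re\mu_{m,r}=0$ is a clean way to see that the chain produced by \prettyref{thm:polybounded solutions} is genuinely bounded rather than merely polynomially bounded. The gap is in the last step, the identification $g(z,0)=f(z)$. Neither \prettyref{pro:normal structure} nor \prettyref{lem:unique initial data} applies here: \prettyref{lem:unique initial data} requires \emph{both} second-order inputs to be solutions of the coefficient equation \prettyref{eq:coefficients}, but in the characterisation \prettyref{eq:as-spir-characterization} of $f$ the polynomial entering is the \emph{constant} $F_{2}$, which does not solve $dF_{2}/dt=B_{2}F_{2}+H_{2}$ unless $B_{2}F_{2}+H_{2}(\cdot,t)\equiv0$. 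What must actually be shown is
\[
\lim_{t\to\infty}e^{tA}\bigl(F_{2}(v(z,0,t)^{2})-F_{2}(v(z,0,t)^{2},t)\bigr)=0,
\]
and the crude bound $\|e^{tA}\|\cdot\|F_{2}-F_{2}(\cdot,t)\|\cdot\|v\|^{2}\le Ce^{(k_{+}(A)-2m(A))t}$ does not tend to $0$ when $n_{0}\ge2$.

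The paper handles this limit by a componentwise computation: after passing to the monomial basis one must control terms of the form $e^{t\lambda_{r}}v_{i}(z,t)v_{j}(z,t)$, or equivalently $e^{t\lambda_{i}}v_{i}\cdot e^{t\lambda_{j}}v_{j}$. This is precisely where \prettyref{lem:v component bound} is used and where the numerical content of $n_{0}=2$ enters: the inequality $k_{+}(A)<3m(A)$ (i.e.\ $\Re\lambda_{1}<3\Re\lambda_{n}$) forces $\Re\lambda_{i},\Re\lambda_{j}<2\Re\lambda_{n}$ whenever $\Re(\lambda_{i}+\lambda_{j}-\lambda_{r})\le0$, so that each factor $e^{t\lambda_{i}}v_{i}$ remains bounded by \prettyref{lem:v component bound}, and it likewise gives $\Re(3\lambda_{n}-\lambda_{r})>0$ for the remaining terms. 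Your argument never invokes \prettyref{lem:v component bound} and, if it were valid, would go through unchanged for larger $n_{0}$; that is the signal that the identification step cannot be as soft as you have written it.
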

\begin{proof}
If $U$ is a unitary matrix, $f\in S_{A}^{a}\left(B^{n}\right)$ and
$h\in\mc H_{A}\left(B^{n}\right)$ is an infinitesimal generator for
$f$ then it is straightforward to check that $U^{*}fU\in S_{U^{*}AU}^{a}\left(B^{n}\right)$
and that $U^{*}hU\in\mc H_{U^{*}AU}\left(B^{n}\right)$ is an infinitesimal
generator for $U^{*}fU$. This allows us to assume without loss of
generality that $A=\mr{diag}\left(\lambda_{1},\ldots,\lambda_{n}\right)$
and $\Re\lambda_{1}\ge\ldots\ge\Re\lambda_{n}>0$ (note that $m\left(A\right)=\Re\lambda_{n}$). 

Let $f$ be an $A$-asymptotically spirallike mapping and $h\in\mc H_{A}\left(B^{n}\right)$
be an infinitesimal generator for $f$. Let $v$ be the solution of
\prettyref{eq:transition teq0}. Also, assume that $f$, $h\left(\cdot,t\right)$
and $v\left(\cdot,t\right)$ have the following Taylor series expansions:\begin{eqnarray*}
f\left(z\right) & = & z+F_{2}\left(z^{2}\right)+\ldots\\
h\left(z,t\right) & = & Az+H_{2}\left(z^{2},t\right)+\ldots\\
v\left(z,t\right) & = & e^{-tA}z+V_{2}\left(z^{2},t\right)+\ldots.\end{eqnarray*}
$ $From \prettyref{eq:transition teq0} and then \prettyref{eq:A-A}
one easily gets that\begin{eqnarray*}
e^{tA}V_{2}\left(z^{2},t\right) & = & -\int_{0}^{t}e^{sA}H_{2}\left(\left(e^{-sA}z\right)^{2},s\right)ds\\
 & = & -\int_{0}^{t}e^{-sB_{2}}H_{2}\left(z^{2},s\right)ds.\end{eqnarray*}

As a consequence of \prettyref{pro:asymptotic spirallikeness} , the
above equality and \prettyref{eq:A-A} we have\begin{multline}
F_{2}\left(z^{2}\right)=\lim_{t\rightarrow\infty}\left(e^{tA}V_{2}\left(z^{2},t\right)+e^{tA}F_{2}\left(\left(e^{-tA}z\right)^{2}\right)\right)\\
=\lim_{t\rightarrow\infty}\left(-\int_{0}^{t}e^{-sB_{2}}H_{2}\left(z^{2},s\right)ds+e^{-tB_{2}}F_{2}\left(z^{2}\right)\right).\label{eq:F2}\end{multline}

We want to show that $F_{2}^{\le}$ can be bounded independently of
$f\in S_{A}^{a}\left(B^{n}\right)$. For this we will show that each
of the coefficients $f_{ij}^{k}$ of the monomials $z_{i}z_{j}e_{k}$
from $F_{2}^{\le}$ can be bounded independently of $f\in S_{A}^{a}\left(B^{n}\right)$. 

We know that \[
B_{2}\left(z_{i}z_{j}e_{k}\right)=(\lambda_{i}+\lambda_{j}-\lambda_{k})z_{i}z_{j}e_{k}\]
and so\[
e^{tB_{2}}\left(z_{i}z_{j}e_{k}\right)=e^{t\left(\lambda_{i}+\lambda_{j}-\lambda_{k}\right)}z_{i}z_{j}e_{k}.\]
Projecting \prettyref{eq:F2} on the subspace generated by $z_{i}z_{j}e_{k}$
we get\begin{eqnarray*}
f_{ij}^{k} & = & \lim_{t\rightarrow\infty}\left(-\int_{0}^{t}e^{-s\left(\lambda_{i}+\lambda_{j}-\lambda_{k}\right)}h_{ij}^{k}\left(s\right)ds+e^{-t\left(\lambda_{i}+\lambda_{j}-\lambda_{k}\right)}f_{ij}^{k}\right)\\
 & = & \lim_{t\rightarrow\infty}\left(-\int_{0}^{t}e^{-s\left(\lambda_{i}+\lambda_{j}-\lambda_{k}\right)}\left(h_{ij}^{k}\left(s\right)+\left(\lambda_{i}+\lambda_{j}-\lambda_{k}\right)f_{ij}^{k}\right)ds+f_{ij}^{k}\right)\end{eqnarray*}
($h_{ij}^{k}$$\left(s\right)$ are the coefficients of the monomials
$z_{i}z_{j}e_{k}$ from $H_{2}\left(z^{2},s\right)$). Hence\begin{equation}
\lim_{t\rightarrow\infty}\int_{0}^{t}e^{-s\left(\lambda_{i}+\lambda_{j}-\lambda_{k}\right)}\left(h_{ij}^{k}\left(s\right)+\left(\lambda_{i}+\lambda_{j}-\lambda_{k}\right)f_{ij}^{k}\right)ds=0.\label{eq:necessary condition}\end{equation}
For the coefficients of the monomials of $F_{2}^{\le}$ we have that
$\Re\left(\lambda_{i}+\lambda_{j}-\lambda_{k}\right)\le0$, hence
we can use \prettyref{lem:bounded ass coefficients} and the fact
that $\lambda_{i}+\lambda_{j}-\lambda_{k}\neq0$ $ $(since $A$ is
nonresonant) to conclude that the coefficients of $F_{2}^{\le}$ are
bounded independently of $f$ ($h_{ij}^{k}$ are bounded because $\mc N_{A}$
is compact). $ $

Let $f\left(z,t\right)$ denote the polynomially bounded Loewner chain
with infinitesimal generator $h$ and such that $F_{2}^{\le}\left(z^{2},0\right)=F_{2}^{\le}\left(z^{2}\right)$
(see \prettyref{thm:existence of solutions}). We will see that $f=f\left(\cdot,0\right)$.
This will show that $S_{A}^{a}\left(B^{n}\right)\subset S_{A}^{\mc F}\left(B^{n}\right)$
where \[
\mc F=\left\{ F_{2}^{\le}:f\left(z\right)=z+F_{2}\left(z^{2}\right)+\ldots\in S_{A}^{a}\left(B^{n}\right)\right\} .\]
 By \prettyref{thm:compact class} this yields the normality of $S_{A}^{a}\left(B^{n}\right)$.

It is enough to check that\begin{equation}
0=f\left(z\right)-f\left(z,0\right)=\lim_{t\rightarrow\infty}e^{tA}\left(F_{2}\left(v\left(z,t\right)^{2}\right)-F_{2}\left(v\left(z,t\right)^{2},t\right)\right).\label{eq:necessary for embeddable ass}\end{equation}
We know that (see \prettyref{eq:coefficients}, \prettyref{eq:bounded solution},
\prettyref{eq:polybounded green})\begin{multline*}
F_{2}\left(z^{2},t\right)=e^{tB_{2}}F_{2}^{\le}\left(z^{2}\right)+\int_{0}^{\infty}G_{B_{2}}\left(t-s\right)N_{2}\left(z^{2},s\right)ds\\
=e^{tB_{2}}F_{2}^{\le}\left(z^{2}\right)+\int_{0}^{t}e^{\left(t-s\right)B_{2}}H_{2}^{\le}\left(z^{2},s\right)ds-\int_{t}^{\infty}e^{\left(t-s\right)B_{2}}H_{2}^{+}\left(z^{2},s\right)ds\\
=F_{2}^{\le}\left(z^{2}\right)+e^{tB_{2}}\int_{0}^{t}e^{-sB_{2}}\left(H_{2}^{\le}\left(z^{2},s\right)+B_{2}F_{2}^{\le}\left(z^{2}\right)\right)ds\\
-\int_{0}^{\infty}e^{-sB_{2}}H_{2}^{+}\left(z^{2},s+t\right)ds.\end{multline*}
Substituting the above in \prettyref{eq:necessary for embeddable ass}
we need to verify that\begin{equation}
\lim_{t\rightarrow\infty}e^{tA}e^{tB_{2}}\int_{0}^{t}e^{-sB_{2}}\left(H_{2}^{\le}\left(v\left(z,t\right)^{2},s\right)+B_{2}F_{2}^{\le}\left(v\left(z,t\right)^{2}\right)\right)ds=0\label{eq:first necessary}\end{equation}
and \begin{equation}
\lim_{t\rightarrow\infty}e^{tA}\int_{0}^{\infty}e^{-sB_{2}}\left(H_{2}^{+}\left(v\left(z,t\right)^{2},s\right)-H_{2}^{+}\left(v\left(z,t\right)^{2},s+t\right)\right)ds=0.\label{eq:second necessary}\end{equation}

Using \prettyref{eq:AB}, \prettyref{eq:first necessary} becomes\begin{equation}
\lim_{t\rightarrow\infty}\int_{0}^{t}e^{-sB_{2}}\left(H_{2}^{\le}\left(\left(e^{tA}v\left(z,t\right)\right)^{2},s\right)+B_{2}F_{2}^{\le}\left(\left(e^{tA}v\left(z,t\right)\right)^{2}\right)\right)ds=0.\label{eq:first necessary 2}\end{equation}
Let $v=\left(v_{1},\ldots,v_{n}\right)$. Separating the monomials
in \prettyref{eq:first necessary 2} it is enough to prove that \[
\lim_{t\rightarrow\infty}e^{t\lambda_{i}}v_{i}\left(z,t\right)e^{t\lambda_{j}}v_{j}\left(z,t\right)\int_{0}^{t}e^{-s\left(\lambda_{i}+\lambda_{j}-\lambda_{k}\right)}c_{ij}^{k}\left(s\right)ds=0\]
($c_{ij}^{k}\left(s\right)$ are the coefficients of the monomials
in the polynomial $H_{2}^{\le}\left(\cdot,s\right)+B_{2}F_{2}^{\le}$)
provided that $\Re\left(\lambda_{i}+\lambda_{j}-\lambda_{k}\right)\le0$
and (because of \prettyref{eq:necessary condition})\[
\lim_{t\rightarrow\infty}\int_{0}^{t}e^{-s\left(\lambda_{i}+\lambda_{j}-\lambda_{k}\right)}c_{ij}^{k}\left(s\right)ds=0.\]

It is now enough to check that $e^{t\lambda_{i}}v_{i}\left(z,t\right)$
and $e^{t\lambda_{j}}v_{j}\left(z,t\right)$ are bounded on $\{z\}\times[0,\infty)$,
assuming that $\Re\left(\lambda_{i}+\lambda_{j}-\lambda_{k}\right)\le0$.
This follows from \prettyref{lem:v component bound} provided that
$\Re\lambda_{i},\Re\lambda_{j}<2\Re\lambda_{n}$. Assume that this
is not the case, so for example $\Re\lambda_{i}\ge2\Re\lambda_{n}$.
This implies that \[
\Re\left(\lambda_{i}+\lambda_{j}-\lambda_{k}\right)\ge\Re\left(3\lambda_{n}-\lambda_{1}\right)>0\]
which contradicts $\Re\left(\lambda_{i}+\lambda_{j}-\lambda_{k}\right)\le0$.
For the last inequality we used the hypothesis $n_{0}=2$ which implies
that $2\le\Re\lambda_{1}/\Re\lambda_{n}<3$. 

Separating the monomials in \prettyref{eq:second necessary} it is
enough to prove that\[
\lim_{t\rightarrow\infty}e^{t\lambda_{k}}v_{i}\left(z,t\right)v_{j}\left(z,t\right)\int_{0}^{\infty}e^{-s\left(\lambda_{i}+\lambda_{j}-\lambda_{k}\right)}d_{ij}^{k}\left(s,t\right)ds=0\]
provided that $\Re\left(\lambda_{i}+\lambda_{j}-\lambda_{k}\right)>0$
($d_{ij}^{k}\left(s,t\right)$ are the coefficients of the monomials
in the polynomial $H_{2}^{+}\left(\cdot,s\right)-H_{2}^{+}\left(\cdot,s+t\right)$).
Since $H_{2}^{+}\left(\cdot,s\right)-H_{2}^{+}\left(\cdot,s+t\right)$
can be bounded independently of $s$ and $t$ we see that $\int_{0}^{\infty}e^{-s\left(\lambda_{i}+\lambda_{j}-\lambda_{k}\right)}d_{ij}^{k}\left(s,t\right)ds$
can be bounded independently of $t$. Hence it is enough to check
that \[
\lim_{t\rightarrow\infty}e^{t\lambda_{k}}v_{i}\left(z,t\right)v_{j}\left(z,t\right)=0.\]

If $\Re\lambda_{i},\Re\lambda_{j}\le2\Re\lambda_{n}$ then using \prettyref{lem:v component bound}
we have \[
\left\Vert e^{t\lambda_{k}}v_{i}\left(z,t\right)v_{j}\left(z,t\right)\right\Vert \le C\left(1+t\right)^{2}e^{-t\Re\left(\lambda_{i}+\lambda_{j}-\lambda_{k}\right)}.\]
 If $\Re\lambda_{i}>2\Re\lambda_{n}$ or $\Re\lambda_{j}>2\Re\lambda_{n}$
then using \prettyref{lem:v component bound} again we get\[
\left\Vert e^{t\lambda_{k}}v_{i}\left(z,t\right)v_{j}\left(z,t\right)\right\Vert \le Ce^{-t\Re\left(3\lambda_{n}-\lambda_{k}\right)}.\]
Since $ $ $\Re\left(\lambda_{i}+\lambda_{j}-\lambda_{k}\right)>0$
and $\Re\left(3\lambda_{n}-\lambda_{k}\right)\ge\Re\left(3\lambda_{n}-\lambda_{1}\right)>0$
the above inequalities prove the desired limit. This completes the
proof.\end{proof}
\begin{rem}
It is not clear whether $S_{A}^{a}\left(B^{n}\right)$ is closed under
the assumptions of the above Theorem. Suppose that $\left\{ f_{k}\right\} $
is a sequence in $S_{A}^{a}\left(B^{n}\right)$ converging to some
$f\in S\left(B^{n}\right)$. Let $\left\{ f_{k}\left(z,t\right)\right\} $
be polynomially bounded Loewner chains such that $f_{k}\left(z,0\right)=f_{k}\left(z\right)$.
Then by \prettyref{lem:Loewner subsequence} we have that up to a
subsequence $\left\{ f_{k}\left(z,t\right)\right\} $ converges to
a polynomially bounded Loewner chain $f\left(z,t\right)$ such that
$f\left(z,0\right)=f\left(z\right)$. In order to conclude that $f\in S_{A}^{a}\left(B^{n}\right)$
it would be natural to have that $f$ satisfies \prettyref{eq:as-spir-characterization}
with $v$ satisfying $f\left(v\left(z,t\right),t\right)=f\left(z\right)$.
Unfortunately one can find examples when this doesn't happen.

\begin{rem}\label{rem:Loewner and asymptotic} \prettyref{eq:necessary condition}
gives a necessary condition for a mapping $h\in\mc H_{A}$ to be the
infinitesimal generator associated to some $f\in S_{A}^{a}\left(B^{n}\right)$.
It is possible to choose $h$ such that \prettyref{eq:necessary condition}
is not satisfied for any $f\in S_{A}^{a}\left(B^{n}\right)$. This
means that unlike the $n_{0}=1$ case, there exist polynomially bounded
Loewner chains for which the first element is not from $S_{A}^{a}\left(B^{n}\right)$. 

\end{rem}\end{rem}
\begin{acknowledgement*}
This paper is part of the author's Ph.D. thesis at the University
of Toronto under the supervision of Ian Graham.
\end{acknowledgement*}

\bibliographystyle{plain}
\bibliography{GFT}

\end{document}